\documentclass[10pt,twoside]{article}
\usepackage[utf8]{inputenc}
\usepackage{amssymb}
\usepackage{float}
\usepackage{algorithm}
\usepackage{algorithmic}
\usepackage{ragged2e}
\usepackage{dsfont}
\usepackage[unicode]{hyperref}
\usepackage{amsfonts}
\usepackage[centertags]{amsmath}
\usepackage{amsthm}
\usepackage{newlfont}
\usepackage{array}
\usepackage{mathtools}
\usepackage{graphicx}
\usepackage{calrsfs}
\usepackage{xcolor}
\usepackage{multirow}
\usepackage[mathscr]{euscript}

\begin{document}

\centerline{\Large{\bf Fermatean fuzzy type entropy-based new integrated decision making method }}
\centerline{\Large{\bf  with analysis of energy poverty in T\"{u}rkiye application}}
%\centerline{\Large{\bf    based on WASPAS and similarity measures}}

\centerline{}

\centerline{\bf {Halim Ba\c{s}}(a), \bf {Murat Kiri\c{s}ci}(b*)}
%\centerline{}
\centerline{(a) Marmara University, Faculty of Economics, Istanbul, T\"{u}rkiye}
\centerline{(b) Istanbul University-Cerrahpa\c{s}a, Department of Biostatistics and Medical Informatics, Istanbul, T\"{u}rkiye}

\centerline{e-mail: mkirisci@hotmail.com}

\centerline{*Corresponding Author}
%\centerline{}

\newtheorem{Theorem}{\quad Theorem}[section]

\newtheorem{Definition}[Theorem]{\quad Definition}

\newtheorem{Proposition}[Theorem]{\quad Proposition}

\newtheorem{Corollary}[Theorem]{\quad Corollary}

\newtheorem{Lemma}[Theorem]{\quad Lemma}

\newtheorem{Example}[Theorem]{\quad Example}

\centerline{}
\textbf{Abstract:}
Income-based poverty indicators are insufficient to explain the multifaceted socio-economic issue of energy poverty fully. The ability of households to obtain and use energy services sustainably is significantly influenced by structural factors like building efficiency, energy prices, climate, and regional disparities. Moreover, the assessment of energy poverty inherently involves uncertainty, vagueness, and subjective judgments, which limit the applicability of classical deterministic evaluation approaches. This paper proposes a novel, integrated multi-criteria decision-making framework based on Fermatean fuzzy sets to address these issues. To objectively assess the uncertainty present in Fermatean fuzzy information, a novel Fermatean fuzzy entropy metric is first presented. The proposed entropy employs a nonlinear structure, enabling a more flexible and sensitive representation of fuzziness in complex decision environments. The subjective weights obtained from the Fermatean fuzzy PIPRECIA approach are then blended with the entropy-based objective weights. Lastly, options are ranked according to their relative usefulness in relation to ideal and anti-ideal solutions using the Fermatean fuzzy MARCOS approach. The proposed framework is applied to the regional evaluation of energy poverty in T\"{u}rkiye, considering seven geographical regions and six key criteria: income level, energy prices, energy efficiency, building efficiency, climate, and urbanization. The results reveal significant regional disparities in energy poverty, with Eastern and Southeastern Anatolia exhibiting the highest vulnerability, while Marmara and Aegean regions display the lowest risk. Sensitivity analysis, weight perturbation analysis, and entropy-criterion dominance analysis validate the robustness and stability of the suggested model across various parameter settings. The results illustrate the multifaceted and regionally distinctive nature of energy poverty, offering crucial theoretical, managerial, and policy-relevant insights. For policymakers and practitioners seeking to develop focused, effective, and socially just energy policies in the face of uncertainty, the proposed method provides a robust and adaptable decision-support tool. 
\centerline{}

%{\bf Subject Classification:} Primary *** ; Secondary ****. \\

{\bf Keywords:} Energy poverty, Fermatean fuzzy set, decision-making, entropy, PIPRECIA, MARCOS

\section{Introduction}

Due to rising energy costs, climate change, and widening social disparities, energy poverty has become a growing concern in both developed and emerging nations. Energy poverty, in contrast to traditional income-based poverty, is the result of households' inability to obtain sufficient energy services for lighting, heating, cooling, and other essential needs. This phenomenon is intrinsically multifaceted and context-dependent, influenced not only by financial limitations but also by structural, climatic, and regional factors.\\

Assessing energy poverty poses substantial methodological challenges. Decision-makers must account for heterogeneous criteria, uncertain and incomplete information, and subjective expert judgments. Fuzzy set theory and sophisticated multi-criteria decision-making (MCDM) techniques have gained popularity since traditional quantitative approaches often fall short of capturing these intricacies. Compared to intuitionistic and Pythagorean fuzzy sets, Fermatean fuzzy sets (FFSs) offer a wider domain for membership and non-membership degrees, providing a more flexible framework for characterizing uncertainty, as seen in recent advances in fuzzy theory.\\

Within this context, entropy-based weighting methods play a vital role in objectively determining the relative importance of criteria under uncertainty. However, existing entropy measures are often insufficiently sensitive to nonlinear and complex fuzzy information structures, particularly in real-world socioeconomic applications such as energy poverty.\\

This study goals to address these methodological and applied challenges by proposing a novel Fermatean fuzzy entropy-based integrated MCDM framework and applying it to the regional assessment of energy poverty in T\"{u}rkiye.
% -----------------------------------------------------------

\subsection{Research Motivation}
This study is primarily motivated by two interconnected needs. First, there is a growing understanding that energy poverty is a complex socio-technical phenomenon driven by elements such as housing quality, climate, energy efficiency, and geographical inequality, rather than being solely an income-related problem. Second, the subjectivity, ambiguity, and uncertainty inherent in expert-based assessments of such multifaceted issues are often too great for current decision-support technologies to handle.\\

T\"{u}rkiye presents a particularly relevant case due to its pronounced regional disparities in income, climate, building stock, and urbanization patterns. Under a uniform national energy policy, regions experience markedly different levels and forms of energy poverty. This motivates the need for a robust, uncertainty-aware analytical framework that can capture regional heterogeneity and support targeted policy interventions.

\subsection{Necessity}

From a methodological perspective, traditional MCDM approaches based on crisp or probabilistic data are inadequate for modeling the ambiguous and linguistic nature of expert judgments in energy poverty assessments. While fuzzy-based MCDM methods have been widely used, many existing studies rely on intuitionistic or Pythagorean fuzzy sets, which impose restrictive constraints on membership and non-membership degrees.\\

From a policy and managerial perspective, the absence of region-specific and multidimensional assessments of energy poverty limits the effectiveness of energy and social policies. Without a reliable prioritization of regions and criteria, public resources risk being allocated inefficiently, and one-size-fits-all policy approaches may exacerbate existing inequalities. Therefore, a comprehensive and uncertainty-sensitive evaluation framework is urgently needed.

\subsection{Research Gap}
A review of the current literature reveals several significant gaps. First, despite the growing interest in Fermatean fuzzy sets in recent years, there remains a dearth of entropy metrics designed explicitly for Fermatean fuzzy data. Existing entropy formulations often lack sufficient nonlinearity and sensitivity to capture complex uncertainty structures.\\

Second, there is a lack of integrated frameworks that simultaneously combine objective entropy-based weighting, subjective expert-driven weighting, and robust ranking methods under a unified Fermatean fuzzy environment. Most studies focus on either methodological developments or isolated applications, rather than offering a comprehensive decision-support framework.\\

Third, empirical studies on energy poverty predominantly focus on national-level indicators or household microdata, while sub-national and regional analyses—especially under uncertainty—remain scarce. In the context of T\"{u}rkiye, systematic regional energy poverty assessments using advanced fuzzy MCDM techniques are largely absent.

\subsection{Originality}

The originality of this study lies in several aspects. First, a novel Fermatean fuzzy entropy measure is proposed, incorporating a nonlinear structure that enhances sensitivity to uncertainty and fuzziness. This entropy measure satisfies essential axiomatic properties and provides a flexible tool for objective criteria weighting.\\

Second, the study introduces an integrated FF-MCDM framework that combines the proposed entropy with FF-PIPRECIA for subjective weighting and FF-MARCOS for alternative ranking. To the best of the authors' knowledge, this comprehensive methodology has never been used in an energy poverty study before.\\

Third, applying the suggested methodology to the regional evaluation of energy poverty in T\"{u}rkiye is a new empirical contribution that sheds light on regional determinants of energy poverty as well as geographical disparities.

\subsection{Contribution}
The following succinctly describes the research's contributions: \\

Theoretical Contribution: By presenting a novel FF-entropy measure and demonstrating its efficacy in addressing challenging socio-economic decision-making issues, the paper contributes to the body of knowledge in fuzzy decision theory. Additionally, it supports the idea that energy poverty is a multifaceted, regionally specific phenomenon. \\

Methodological Contribution: An integrated Fermatean fuzzy MCDM framework combining entropy, FF-PIPRECIA, and FF-MARCOS is proposed, providing a robust and flexible tool for decision-making under uncertainty.\\

Empirical Contribution: The regional application to T\"{u}rkiye reveals significant spatial disparities in energy poverty and identifies key criteria driving vulnerability in different regions.\\

Policy and Managerial Contribution: The results provide actionable insights for policymakers and energy managers, enabling targeted, region-specific interventions and supporting the design of socially just and efficient energy policies.

\section{Literature}

\subsection{Energy Poverty}

The literature defines energy poverty as a multifaceted condition that cannot be fully explained by inadequate income. Early research has shown that energy poverty should be considered in conjunction with factors such as energy efficiency, home characteristics, and access to energy services. It is not just about households' ability to pay for energy expenses. Boardman \cite{board}, and Hills \cite{hills} highlighted the flaws in conventional methods of quantifying fuel poverty, demonstrating how statistical criteria that mask the actual level of poverty can lead to households in low-energy-efficiency dwellings being overlooked. Bouzarovski and Petrova \cite{bour} and Thomson et al. \cite{thompson}, who contended that energy poverty is not just an economic reality but also intricately linked to spatial and social inequities, expanded on this strategy. As a result, the literature on energy justice, welfare regimes, and social policy all touch on the topic of energy poverty.\\

Among the most important factors influencing energy poverty are income level and the dynamics of energy prices. Particularly during times of price shocks, not having enough money to pay for energy costs exacerbates energy poverty and lowers household welfare. Okushima \cite{okush} demonstrates how growing energy costs and inadequate energy efficiency disproportionately affect low-income households in Japan, using a novel indicator. Renner et al. \cite{renner} draw attention to the relationship between price fluctuations and welfare disparities, stressing that in Indonesia, rising energy costs disproportionately affect low-income households. Consequently, interventions such as the removal of subsidies should be carefully designed and implemented. Guan et al. \cite{guanetal}, comparatively examining the pressure of price crises on household budgets on a global scale, also reveal that rapid price increases can make energy costs 'unaffordable', thus deepening energy poverty. Belaïd \cite{bela} links this debate to climate and carbon policies, arguing that policies targeting emission reduction can increase energy poverty by having unexpected negative consequences for low-income groups. Therefore, the dimension of social equity should be central to the design of policy. In a broader causal framework, Cheikh et al. \cite{chei} emphasize that the primary factors triggering energy poverty in Europe are energy price increases, low income levels, and energy-inefficient housing, highlighting the need for targeted policies that account for regional differences. In this context, the literature emphasizes that pricing and income support are crucial in combating energy poverty, but are insufficient on their own and should be considered in conjunction with structural factors, such as housing and energy efficiency.\\

Among the main factors influencing the structural aspect of energy poverty are dwelling features and energy efficiency. To reduce energy poverty, Abbas et al. \cite{abbas} argue that expanding energy access and enhancing energy and environmental efficiency in developing Asian nations are crucial, and that investments in renewable energy should complement this process. Income inequality, energy poverty, and energy efficiency are cyclically related, according to Dong et al. \cite{dong}, who argue that inequality exacerbates energy poverty and hinders households' ability to make energy efficiency investments. Conversely, low efficiency exacerbates energy poverty. Regarding the institutional capacity and governance aspect, Wang et al. \cite{wang} propose that fiscal decentralization can enhance local governments' ability to combat energy poverty, but this is contingent upon the utilization of technological advancements and energy efficiency. In a rural context, Liu et al. \cite{liuzhou} emphasize that rural centralization policies in China can increase access to energy. However, this effect will remain limited unless supported by infrastructure and efficiency improvements.\\

Evans et al. \cite{evans}, with their conceptualization of the “poverty premium,” emphasize that low-income households are often forced to live in low-energy housing and that increasing energy expenditures reinforces poverty. Housing renovations and reorganization processes can mitigate vulnerability by reducing energy costs through the use of efficiency resources, as Boemi and Papadopoulos \cite{boemi} suggest in Greece. Meanwhile, Matos et al. \cite{mafaldo} note that thermal building regulations and energy efficiency policies play a crucial role in mitigating energy vulnerability in Portugal. Desvallées \cite{desva} reveals that low-carbon renovations in Southern European social housing not only reduce consumption but also improve thermal comfort and quality of life. Fabbri and Gaspari \cite{fabbari} map the spatial appearance of energy vulnerability using energy performance certificates in Bologna, showing that low-performing buildings are unevenly clustered within the city. Teixeira et al. \cite{teix} provide a multi-dimensional framework that combines this exchange, ensuring that housing quality, energy efficiency, physical accessibility, and thermal comfort are also measured continuously and systematically alongside income and prices.\\

The literature increasingly highlights that energy poverty is not limited to domestic heating or electricity use, but intersects with areas such as climate vulnerability, health and education impacts, transportation, and technological transformation. Robinson and Mattioli \cite{robinson} address energy poverty in the context of the United Kingdom as a two-pronged vulnerability, analyzing both the lack of access to energy in homes (fuel poverty) and transportation energy poverty, while emphasizing how the geographical and economic disadvantages of low-income households exacerbate both areas. In the discussion of technological solutions, Sovacoll and Furszyfer Del Rio \cite{sova} acknowledge the potential of smart home technologies in Europe to optimize consumption, reduce costs, and increase efficiency. However, they note that financial barriers to accessing these technologies and data security risks can create new inequalities for low-income households. In the area of measurement and estimation, Al Kez et al. \cite{alkez} demonstrate the determinants of income, energy consumption, and housing characteristics by estimating energy poverty in the United Kingdom using machine learning methods, arguing that data-driven approaches can strengthen targeting in policy design. In terms of impact channels, Katoch et al. \cite{katoch} systematically examine the consequences of energy poverty on health and education, highlighting effects such as non-communicable diseases, mental health problems, and impaired educational performance of children, particularly in low-income regions. In the climate dimension, Casillas and Kammen \cite{casi} emphasize that reducing energy poverty is crucial for both development and combating climate change; renewable energy can increase access, reduce poverty, and limit emissions. Thomson and Snell \cite{thompsnell} suggest that EU and national policy frameworks addressing climate change can serve as a starting point for addressing energy poverty, and that this area can be intersected with the energy efficiency agenda. Discussing this tense relationship, Chakravarty and Tavoni \cite{chakravarty} argue that increasing consumption to reduce energy poverty can lead to higher emissions; however, these two goals can be harmonized through the use of renewable energy and efficiency technologies. Jessel et al. \cite{jessel} present an intersection of energy poverty, health, and climate, drawing on a broad literature, while Thomson et al. \cite{thompsenetal}, Streimikiene et al. \cite{streim} emphasize that energy poverty should also encompass the need for cooling and that rising temperatures exacerbate public health risks. Streimikiene et al. \cite{streim} state that if climate policies targeting households in the EU are designed in conjunction with social assistance and efficiency measures, both emission reduction and energy poverty reduction can be achieved simultaneously. Ehsanullah et al. \cite{ehsan} examine how energy insecurity evolves into energy poverty in conjunction with environmental concerns, while Yadava et al. \cite{yadava} emphasize that the interplay of energy, poverty, and climate vulnerability has combined effects on sustainable development, highlighting the need for coordinated policy packages. This expanding literature shows that energy poverty cannot be explained solely by the expenditure-to-income ratio, but is a state of vulnerability that encompasses multiple risks.\\

Urbanization and spatial inequalities are important contextual dynamics that transform both the determinants and manifestations of energy poverty. Sheng, He and Gua \cite{sheng} emphasize that urbanization, with appropriate infrastructure and planning, can offer a more energy-efficient lifestyle; however, rapid and unplanned urbanization can lead to social and environmental problems due to infrastructure deficiencies and increased demand. Sun and Tong \cite{sun} argue that rapid urbanization exacerbates energy poverty in low-income households by increasing pressure on energy infrastructure, and that regional inequalities in access to energy in cities further deepen the problem. Reames \cite{reames} address the urban inequality dimension from a broader social justice perspective, stating that low-income and ethnic minority groups are disproportionately affected by energy costs due to their concentration in energy-inefficient housing, and that energy poverty reflects social inequalities. Similarly, Yawale et al \cite{yawele} demonstrate that income inequality, inadequate access to infrastructure, and high costs collectively exacerbate urban energy poverty in Indian metropolises. Discussing the specific trajectory of urbanization dynamics in developing countries, Mahumane and Mulder \cite{mahu} highlight that in Mozambique, rapid urbanization has created a "new" problem area of urban energy poverty due to the pressure on energy infrastructure, and that low-income households migrating from rural to urban areas face difficulties in accessing energy. Hasanujzaman and Omar \cite{hasanuj} reveal that, in the case of Bangladesh, both intra-household and extra-household factors affect multidimensional energy poverty. They highlight that urbanization can produce both mitigating and amplifying effects, emphasizing the need for policies that are sensitive to local dynamics. In a broader international context, Xu et al. \cite{xuetal} note that, in the 48 countries covered by the BRI, urbanization can reduce rural energy poverty but also negatively impact environmental sustainability by increasing carbon emissions. Qi et al. \cite{qiChen} acknowledge the potential of urbanization in China to reduce energy poverty by increasing access to energy infrastructure, but emphasize that regional disparities and rapidly growing urban energy demand may limit this positive effect. In the context of the EU, Thomson and Bouzarovski \cite{thompsonBou} demonstrate that energy poverty deepens social inequalities and necessitates an approach that integrates efficiency improvements, social assistance, and pricing policies, while Bouzarovski and Simcock \cite{bouz} address energy justice from a spatial perspective, showing how infrastructure inequalities and geographical factors shape access to energy. This accumulation of work highlights the risk that analyses that overlook the spatial patterns of energy poverty risk may overlook problematic regions and vulnerable groups.\\

In light of these findings, two main gaps stand out in the literature: (i) although the need for multidimensional measurement is frequently emphasized (Hills, \cite{hills}; Thomson et al., \cite{thompson}; Teixeira et al., \cite{teix}), MCDM techniques that allow decision-makers to weight and rank multiple criteria simultaneously remain relatively limited; (ii) while existing MCDM studies are increasing, they often fail to discuss the country-specific policy context thoroughly or are not systematically widespread in developing countries. In this regard, Zhou et al. \cite{zhouwang} demonstrate the comparative measurement power of MCDM by assessing energy poverty in BRI countries using the entropy weight-TOPSIS approach, while Lu et al. \cite{LuRen} emphasize the importance of spatial-temporal energy poverty analysis at the sub-national level in China. Hasheminasab et al. While \cite{hashem} demonstrates the contribution of multidimensional comparison to policy discussions by proposing a new assessment for EU countries, Shieh and Shah \cite{sheih} highlight the measurement advantages, especially under data uncertainty, by developing a fuzzy MCDM-based multidimensional index for developing countries. Nevertheless, in a developing country like T\"{u}rkiye, there are limited studies that produce a holistic index or assessment using MCDM techniques, considering the dimensions of income, price, efficiency, climate, and urbanization together. This study aims to fill this gap.

\subsection{Fermatean Fuzzy Environment}

Real-life decision problems are inherently multifaceted and often involve a substantial degree of uncertainty. This uncertainty may arise from imprecise information, randomness, incomplete datasets, or the subjective perceptions of decision-makers. In many practical applications, problem attributes are commonly expressed using linguistic terms rather than precise numerical values. Capturing and modeling such imprecise knowledge is essential for improving the reliability of decision outcomes and supporting more effective solution strategies.\\

Fuzzy set (FS) theory, initially introduced by Zadeh \cite{Zadeh}, provides a mathematical framework for representing uncertainty and vagueness in human-centric systems. Although FS theory marked a significant departure from classical binary logic by allowing partial membership, it does not explicitly reflect the hesitation or dissatisfaction inherent in human judgments. To overcome this limitation, Atanassov \cite{Atan} proposed intuitionistic fuzzy sets (IFSs), which incorporate both membership and non-membership degrees. Since their introduction, IFSs have been extensively applied in optimization and decision-analysis studies.
However, IFS theory is constrained when the sum of membership and non-membership degrees exceeds unity. To relax this restriction, Yager \cite{Yager0}, \cite{Yager} introduced Pythagorean fuzzy sets (PFSs), where the squared sum of membership and non-membership degrees is required to be less than one. Building upon these developments, Senapati and Yager \cite{SenYager}, \cite{SenYager1} proposed Fermatean fuzzy sets (FFSs), which further extend the expressive capacity of fuzzy modeling. In FFSs, the sum of the membership and non-membership degrees raised to the third power is bounded by one, enabling a richer representation of uncertainty. Numerous applications of FFSs have since been reported in the literature \cite{gargetal, Jeevaraj}, \cite{kirisci0}- \cite{LiuLiu}, \cite{SenYager1, SenYager2, kirisci22}.\\

The representation of ambiguity and vagueness in decision environments was initially addressed through the fuzzy set framework introduced by Zadeh \cite{Zadeh}. While classical FSs capture uncertainty via membership degrees, they fail to account for explicit non-membership information. To address this shortcoming, Atanassov’s intuitionistic fuzzy set (IFS) theory \cite{Atan}  incorporates non-membership degrees, thereby providing a more informative description of evaluation data. Extensions of IFS theory have been explored in various mathematical and applied contexts, including the work of Kirisci \cite{kirisci2019}, who introduced Fibonacci statistical convergence in intuitionistic fuzzy normed spaces.\\

Despite their advantages, IFSs impose a strict constraint on the sum of membership and non-membership degrees, limiting their applicability in highly uncertain environments. In response, Yager \cite{Yager0}, \cite{Yager} proposed the Pythagorean fuzzy set (PFS) model, which allows greater flexibility by enforcing the condition $MD^{2} + ND^{2} \leq 1$. This relaxation enables decision-makers to articulate their preferences more freely. Nevertheless, the growing complexity of decision-making structures continues to challenge experts in generating consistent and reliable assessments.\\

To further enhance the expressive power of fuzzy models, Fermatean fuzzy sets (FFSs) were introduced by Senapati and Yager \cite{SenYager}. By constraining the cubic sum of membership and non-membership degrees, FFSs provide a broader domain for information representation compared to both IFSs and PFSs. The foundational properties and mathematical characteristics of FFSs were subsequently established in \cite{SenYager1}, \cite{SenYager2}, demonstrating their superiority in managing complex and ambiguous decision scenarios.\\

Recent studies have increasingly focused on the development of aggregation, similarity, and correlation measures within the Fermatean fuzzy framework. Garg et al. \cite{gargetal} introduced a family of aggregation operators constructed using Yager’s t-norm and t-conorm to aggregate Fermatean fuzzy information in decision-making contexts. An interval-valued Fermatean fuzzy (IVFF)-based hybrid MCDM approach was proposed in \cite{kirisciASC} for risk assessment in autonomous vehicle systems.\\

Kirisci \cite{kirisci0} developed novel correlation coefficients for Fermatean hesitant fuzzy and interval-valued Fermatean hesitant fuzzy elements by employing least common multiple expansions. Furthermore, a three-way approach for computing correlation coefficients among FFSs based on variance and covariance concepts was presented in \cite{kirisci11}. In \cite{kirisciNew}, new correlation and weighted correlation coefficients were proposed to analyze relationships between FFSs.\\

Additional contributions include the introduction of Fermatean hesitant fuzzy sets and corresponding aggregation operators by Kirisci \cite{kirisciIVFFL}, as well as an interval-valued Fermatean fuzzy linguistic kernel principal component analysis model \cite{kirisciKFCA}. The concepts of Fermatean fuzzy soft sets and their fundamental properties were discussed in \cite{kirisci1}, alongside definitions of Fermatean fuzzy soft entropy and standard distance measures such as Hamming and Euclidean distances. Prioritized aggregation operators under Fermatean fuzzy environments were developed by Riaz et al. \cite{riazetal}, while group decision-making models based on incomplete Fermatean fuzzy preference relations were proposed in \cite{kirisci22}. Earlier foundational work on interval-valued intuitionistic fuzzy sets \cite{AtanGargov} and interval-valued Pythagorean fuzzy sets laid the groundwork for the later introduction of interval-valued Fermatean fuzzy sets by Jeevaraj \cite{Jeevaraj}.

	\section{Preliminaries}\label{chap:1}

%\subsection{Interval-Valued Fermatean Fuzzy Sets}
\begin{Definition}\cite{SenYager}
	Let $E$ be the universal set. The FFS is defined as the set $\mathcal{F}=\{(x, \zeta_{\mathcal{F}}(x), \eta_{\mathcal{F}}(x)): x \in E\}$, where with $0 \leq \zeta_{\mathcal{F}}^{3}+\eta_{\mathcal{F}}^{3} \leq 1$ and $\zeta_{\mathcal{F}}, \eta_{\mathcal{F}} \in [0,1]$. The hesitation degree has been shown with $h_{\mathcal{F}}=(1-\zeta_{\mathcal{F}}^{3}+\zeta_{\mathcal{F}}^{3} )^{1/3}$.
\end{Definition}

Consider the three FFSs $\mathcal{F}=(\zeta_{\mathcal{F}}, \eta_{\mathcal{F}})$, $\mathcal{F}_{1}=(\zeta_{\mathcal{F_{1}}}, \eta_{\mathcal{F_{1}}})$, $\mathcal{F}_{2}=(\zeta_{\mathcal{F_{2}}}, \eta_{\mathcal{F_{2}}})$. Then,

\begin{itemize}
	\item [(i)] $\mathcal{F}_{1} \cap \mathcal{F}_{2}=\left(\min(\zeta_{\mathcal{F}_{1}}, \zeta_{\mathcal{F}_{2}}), \max(\eta_{\mathcal{F}_{1}}, \eta_{\mathcal{F}_{2}})\right)$,
	\item [(ii)] $\mathcal{F}_{1} \cup \mathcal{F}_{2}=\left(\max(\zeta_{\mathcal{F}_{1}}, \zeta_{\mathcal{F}_{2}}), \min(\eta_{\mathcal{F}_{1}}, \eta_{\mathcal{F}_{2}})\right)$,
	\item [(iii)] $\mathcal{F}^{c}=(\eta_{\mathcal{F}}, \zeta_{\mathcal{F}} )$,
	\item [(iv)]  $\mathcal{F}_{1} \oplus \mathcal{F}_{2}=\left( \sqrt[3]{\zeta_{\mathcal{F}_{1}}^{3} + \zeta_{\mathcal{F}_{2}}^{3} - \zeta_{\mathcal{F}_{1}}^{3}\zeta_{\mathcal{F}_{2}}^{3}}, \eta_{\mathcal{F}_{1}}\eta_{\mathcal{F}_{2}} \right)$,
\item [(v)]  $\mathcal{F}_{1} \otimes \mathcal{F}_{2}=\left(\zeta_{\mathcal{F}_{1}}\zeta_{\mathcal{F}_{21}}, \sqrt[3]{\eta_{\mathcal{F}_{1}}^{3} + \eta_{\mathcal{F}_{2}}^{3} - \eta_{\mathcal{F}_{1}}^{3}\eta_{\mathcal{F}_{2}}^{3}}\right)$,
    \item [(v)] $\alpha \mathcal{F}=\left( \sqrt[3]{1-(1-\zeta_{\mathcal{F}}^{3})^{\alpha}},  (\eta_{\mathcal{F}})^{\alpha} \right)$,
    \item [(vi)] $\mathcal{F}^{\alpha}=\left((\zeta_{\mathcal{F}}^{3})^{\alpha},  \sqrt[3]{1-(1-\eta_{\mathcal{F}}^{3})^{\alpha}},  \right)$.
\end{itemize}

\begin{Definition}\cite{ranMis1}
	For the FFS $\mathcal{F}$,
	\begin{eqnarray}\label{scr}
		SC(\mathcal{F})&=&\frac{1}{2}\left( \zeta_{\mathcal{F}}^{3}(a) -  \eta_{\mathcal{F}}^{3}(a) \right) \\ \label{acc}
		AC(\mathcal{F})&=&\frac{1}{2}\left( \zeta_{\mathcal{F}}^{3}(a) +  \eta_{\mathcal{F}}^{3}(a) \right) \\ \label{Nscr}
		\overline{SC}(\mathcal{F})&=&\frac{1}{2}\left(SC(\mathcal{F})+1 \right)
	\end{eqnarray}
	are called score, accuracy, and normalized score functions, respectively, where $ SC(\mathcal{F}) \in [-1, 1] $,  $ AC(\mathcal{F}) \in [0, 1] $, and $ \overline{SC}(\mathcal{F}) \in [0, 1] $.
\end{Definition}

\begin{Definition}
For $i=1,2, \cdots, n$, consider a set of FFSs $\mathcal{F}_{i}=(\zeta_{\mathcal{F_{i}}}, \eta_{\mathcal{F_{i}}})$. The equation
\begin{eqnarray}\label{FFWA}
	FFWA(\mathcal{F}_{1}, \cdots, \mathcal{F}_{n})&=& \left( \sum_{i=1}^{n} \omega_{i}\zeta_{\mathcal{F}_{i}},  \sum_{i=1}^{n} \omega_{i}\eta_{\mathcal{F}_{i}} \right)\\ \nonumber
	&=&  \Bigg(
	\Bigg(1-\prod_{i=1}^{n} (1-\left(\zeta_{i} \right)^{3})^{\omega_{i}} \Bigg)^{1/3}, 	\prod_{i=1}^{n} \left(\eta_{i} \right)^{\omega_{i}}   \Bigg)
\end{eqnarray}
is called the FF-weighted average (FFWA) operator, and
the equation
\begin{eqnarray}\label{FFWG}
	FFWG(\mathcal{F}_{1}, \cdots, \mathcal{F}_{n})&=& \left( \prod_{i=1}^{n} \zeta_{\mathcal{F}_{i}}^{\omega_{i}},  \prod_{i=1}^{n} \eta_{\mathcal{F}_{i}}^{\omega_{i}} \right)\\ \nonumber
	&=&  \Bigg(  \prod_{i=1}^{n} \left(\zeta_{i} \right)^{\omega_{i}},
\Bigg(1-\prod_{i=1}^{n} (1-\left(\eta_{i} \right)^{3})^{\omega_{i}} \Bigg)^{1/3}     \Bigg)
\end{eqnarray}
is called the FF-weighted geometric (FFWG) operator,
where $\omega_{i}$ is a weight vector of $\mathcal{F}_{i}$ with $\sum_{i=1}^{n}\omega_{i}=1$.
\end{Definition}

	\subsection{PIPRECIA Technique}
	The Pivot Pairwise Relative Criteria Importance Assessment (PIPRECIA) method is a multi-criteria decision-making approach developed to derive the relative weights of evaluation criteria in decision problems. The method was introduced in 2015 by Pamucar, Lazic, and Bozanic as an alternative weighting technique tailored to decision environments characterized by subjective judgments and uncertainty. PIPRECIA relies on a systematic procedure in which experts assess the relative importance of criteria through sequential comparisons rather than exhaustive pairwise evaluations.\\
	
	One of the main advantages of PIPRECIA lies in its simplicity and practicality, which make it particularly suitable for situations where expert opinions are qualitative or imprecise. By reducing the number of required comparisons, the method minimizes cognitive burden and decreases the likelihood of inconsistency in expert assessments. PIPRECIA was specifically designed to overcome several shortcomings of traditional weighting methods, such as the complexity of the Analytic Hierarchy Process (AHP) and the sensitivity of the Step-wise Weight Assessment Ratio Analysis (SWARA) method to ordering bias.\\
	
	Although PIPRECIA, like AHP, is grounded in expert judgment, it avoids the use of extensive pairwise comparison matrices by employing a pivot-based relative evaluation mechanism. Compared to SWARA, the introduction of a pivot criterion enhances consistency and mitigates potential bias, leading to more reliable criterion weight estimation in subjective decision-making contexts.

\subsection{MARCOS Techniques}

The Measurement Alternatives and Ranking according to COmpromise Solution (MARCOS) method is a multi-criteria decision-making technique designed for evaluating and ranking alternatives under multiple, and often conflicting, criteria. Introduced in 2020, MARCOS extends and refines earlier compromise-based MCDM methods such as TOPSIS and VIKOR. The core principle of the method is the simultaneous consideration of ideal and anti-ideal reference points, representing the best and worst possible performance levels, respectively.\\

In the MARCOS framework, each alternative is assessed relative to both reference solutions, and a utility function is computed to quantify its overall performance. This utility-based evaluation enables a clear interpretation of how close each alternative is to the ideal solution while also accounting for its distance from the worst-case scenario. Consequently, the final ranking reflects not only proximity to the ideal alternative but also the relative influence of the least favorable outcomes.\\

A notable strength of the MARCOS method is its flexibility, as it can accommodate both qualitative and quantitative criteria across a wide range of decision-making problems. By integrating the concepts of reference solutions and compromise ranking into a unified framework, MARCOS provides a more comprehensive and robust ranking mechanism compared to conventional MCDM approaches.

\section{New Entropy Measure}

A new entropy metric is provided in this subsection. Additionally, a flexible cosinus function has been modified to provide more nonlinear responses to entropy in this innovation. The current trends in creating tools tailored to address nonlinearity, such as in MCDM, benefit from such solutions. \\

\begin{Definition}
	The mapping $\mathcal{E}: FFS(E) \rightarrow [0,1]$ is called a FF-entropy if the following conditions hold:
\begin{itemize}
	\item [E1.] $0 \leq \mathcal{E}(\mathcal{F}) \leq 1$.
	\item [E2.] $\mathcal{E}(\mathcal{F})=0$ iff $\mathcal{F}$ is a crisp set.
	\item [E3.] $\mathcal{E}(\mathcal{F})=1$ iff $\zeta_{\mathcal{F}}(a)=\eta_{\mathcal{F}}(a)$.
	\item [E4.] $\mathcal{E}(\mathcal{F}) \ leq \mathcal{E}(\mathcal{G})$ iff \\
	- $\mathcal{F} \subseteq \mathcal{G}$, if \\
	- $\mathcal{G} \subseteq \mathcal{F}$, if
	\item [E5.] $\mathcal{E}(\mathcal{F}^{c})=\mathcal{E}(\mathcal{F})$ for each $\mathcal{E}(\mathcal{F}) \in FFS(E)$.
\end{itemize}

\end{Definition}

\begin{Definition}
	Consider the FFS $\mathcal{F}=\{(a, \zeta_{\mathcal{F}}(a), \eta_{\mathcal{F}}(a)): a \in E\}$. The FF-entropy of $\mathcal{F}$ is defined as:
\begin{eqnarray}\label{newentropy}
	\mathcal{E}(\mathcal{F})=\frac{1}{n} \sum_{i=1}^{n}\left[\left\{ \sqrt{2} \cos\left(\pi\left(\frac{ \zeta_{\mathcal{F}}^{3}(a_{i}) - \eta_{\mathcal{F}}^{3}(a_{i})}{4} \right)  \right)-1	 \right\}\frac{1}{\sqrt{2}-1}	\right]
\end{eqnarray}
	\end{Definition}

\begin{Theorem}
 $\mathcal{E}(\mathcal{F})$ is a FF-entropy.
\end{Theorem}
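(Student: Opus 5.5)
The plan is to reduce everything to the one-variable function
\[
\varphi(t)=\frac{\sqrt{2}\cos\left(\frac{\pi t}{4}\right)-1}{\sqrt{2}-1},\qquad t\in[-1,1],
\]
since by (\ref{newentropy}) we have $\mathcal{E}(\mathcal{F})=\frac{1}{n}\sum_{i}\varphi(t_i)$ with $t_i=\zeta_{\mathcal{F}}^{3}(a_i)-\eta_{\mathcal{F}}^{3}(a_i)=2\,SC$ at $a_i$. The Fermatean constraint $\zeta^3+\eta^3\le 1$ with $\zeta,\eta\in[0,1]$ gives $t_i\in[-1,1]$. On this interval $\pi t/4\in[-\pi/4,\pi/4]$, so $\cos(\pi t/4)\in[\frac{1}{\sqrt2},1]$. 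The key facts are that $\varphi$ is even, strictly increasing on $[-1,0]$ and strictly decreasing on $[0,1]$, with $\varphi(0)=1$ and $\varphi(\pm1)=0$. All of these follow directly from the monotonicity of cosine on $[0,\pi/4]$.

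E1 follows because each summand lies in $[0,1]$ and we are averaging. E5 follows because complementation swaps $\zeta$ and $\eta$, so $t_i\mapsto -t_i$, and $\varphi$ is even.

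For E3, $\mathcal{E}=1$ holds iff every $\varphi(t_i)=1$, since each term is at most $1$. By strict monotonicity this happens iff $t_i=0$, i.e.\ $\zeta^3=\eta^3$, i.e.\ $\zeta=\eta$ on all $a_i$.

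For E2, $\mathcal{E}=0$ holds iff every $|t_i|=1$. Using the constraint, this happens iff $(\zeta,\eta)\in\{(1,0),(0,1)\}$ at each point, which is exactly a crisp set. The reverse direction is immediate.

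The main obstacle is E4, whose statement in the paper is garbled. I would interpret it in the standard way for entropies: $\mathcal{E}(\mathcal{F})\le\mathcal{E}(\mathcal{G})$ whenever $\mathcal{F}\subseteq\mathcal{G}$ under $\zeta_{\mathcal{G}}\le\eta_{\mathcal{G}}$ pointwise, or $\mathcal{G}\subseteq\mathcal{F}$ under $\zeta_{\mathcal{G}}\ge\eta_{\mathcal{G}}$ pointwise. Here inclusion means $\zeta_{\mathcal{F}}\le\zeta_{\mathcal{G}}$ and $\eta_{\mathcal{F}}\ge\eta_{\mathcal{G}}$.

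In the first case, $t^{\mathcal{F}}_i=\zeta_{\mathcal{F}}^3-\eta_{\mathcal{F}}^3\le\zeta_{\mathcal{G}}^3-\eta_{\mathcal{G}}^3=t^{\mathcal{G}}_i\le 0$. Since $\varphi$ is increasing on $[-1,0]$, we get $\varphi(t^{\mathcal{F}}_i)\le\varphi(t^{\mathcal{G}}_i)$, and summing gives the claim.

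The second case is symmetric: now $0\le t^{\mathcal{G}}_i\le t^{\mathcal{F}}_i$, and $\varphi$ is decreasing on $[0,1]$. Alternatively, one can apply the first case to the complements and invoke E5.

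If the intended E4 is instead the "sharpening" form $|t^{\mathcal{F}}_i|\ge|t^{\mathcal{G}}_i|$, the same evenness-plus-monotonicity argument covers it verbatim, because $\varphi(t)$ depends only on $|t|$ and is decreasing in $|t|$.
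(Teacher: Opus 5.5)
Your proof is correct. For E1, E2, E3 and E5 it uses the same idea as the paper: everything is governed by $|\zeta^3-\eta^3|$, and E5 follows from evenness of cosine. You also supply a step the paper skips. Each summand $\varphi(t_i)$ lies in $[0,1]$, so the average equals $0$ or $1$ only if every summand does.

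The real difference is in E4. The paper works with the two-variable function $U(x,y)$ and computes partial derivatives separately on the regions $x\le y$ and $x\ge y$. You instead factor through $t=\zeta^3-\eta^3$. Under the side conditions, passing from $\mathcal{F}$ to $\mathcal{G}$ moves each $t_i$ toward $0$, and you only need that $\varphi$ is even and decreasing in $|t|$. No calculus is required, and the second case follows from the first via complements and E5.

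Your route makes the direction of monotonicity transparent, and the paper gets that direction wrong in its intermediate step. On $x\le y$ the factor $\sin(\pi(x^3-y^3)/4)$ is nonpositive, so in fact $\partial V/\partial x\ge 0$ and $\partial V/\partial y\le 0$. This is the reverse of the signs written there, and of the claimed ``decreases with $x$, increases with $y$''. The same reversal occurs on $x\ge y$, where the factors $x^2$ and $y^2$ are also swapped. The paper's final inequality $U(\zeta_{\mathcal{F}},\eta_{\mathcal{F}})\le U(\zeta_{\mathcal{G}},\eta_{\mathcal{G}})$ is nevertheless correct.

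The partial-derivative template does have one advantage. It extends to entropies that also depend on the hesitancy degree, where no one-variable reduction is available. For this cosine entropy, your reduction is the cleaner proof, and it also covers the \emph{sharpening} form of E4 at no extra cost.
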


\begin{proof}
	E1. We will show $0 \leq \mathcal{E}(\mathcal{F}) \leq 1$. It is known to be $0 \leq \zeta_{\mathcal{F}}^{3}(a),  \eta_{\mathcal{F}}^{3}(a) \leq 1$ and $0 \leq \zeta_{\mathcal{F}}^{3}(a)+  \eta_{\mathcal{F}}^{3}(a) \leq 1$. $\pi |\zeta_{\mathcal{F}}^{3}(a) -  \eta_{\mathcal{F}}^{3}(a)| \leq \pi$. Therefore, $0 \leq \mathcal{E}(\mathcal{F}) \leq 1$.\\
	
	If $\mathcal{E}(\mathcal{F})=0$, then $\pi |\zeta_{\mathcal{F}}^{3}(a) -  \eta_{\mathcal{F}}^{3}(a)|=\pi$. Thus, $\zeta_{\mathcal{F}}(a)=0$, $\eta_{\mathcal{F}}(a)=1$, $h_{\mathcal{F}}(a)=0$ or $\zeta_{\mathcal{F}}(a)=1$, $\eta_{\mathcal{F}}(a)=0$, $h_{\mathcal{F}}(a)=0$. This shows us that $\mathcal{F}$ is a crisp set. Conversely, if $\mathcal{F}$ is a crisp set, $\mathcal{E}(\mathcal{F})=0$.\\

	If $\mathcal{E}(\mathcal{F})=1$, then $\pi |\zeta_{\mathcal{F}}^{3}(a) -  \eta_{\mathcal{F}}^{3}(a)|=0$. Hence, $\zeta_{\mathcal{F}}(a)-\eta_{\mathcal{F}}(a)=0$, $\Rightarrow$ $\zeta_{\mathcal{F}}(a)=\eta_{\mathcal{F}}(a)$. Conversely, if $\zeta_{\mathcal{F}}(a)=\eta_{\mathcal{F}}(a)$, $\mathcal{E}(\mathcal{F})=1$.\\
	
	Write the function $U(x,y)=\left[\left\{ \sqrt{2} \cos\left(\pi\left(\frac{ x^{3} - y^{3}}{4} \right)  \right)-1	 \right\}\frac{1}{\sqrt{2}-1}	\right]$. To perform this operation, two situations will be taken into account:  $x \leq y$ and $x \geq y$.\\
	
	For $x \leq y$, $V(x,y)=\left[\left\{ \sqrt{2} \cos\left(\pi\left(\frac{ x^{3} - y^{3}}{4} \right)  \right)-1	 \right\}\frac{1}{\sqrt{2}-1}	\right]$. It must be proven that $V(x,y)$ decreases with $x$ and increases with $y$. The partial derivatives for $x$ and $y$ are as:
	
	\begin{eqnarray*}
		\frac{\partial V(x,y)}{\partial x} = \frac{-3\sqrt{2}\pi x^{2}}{4(\sqrt{2}-1)} \sin \left( \pi\left(\frac{ x^{3} - y^{3}}{4}\right) \right) \leq 0, \\
\frac{\partial V(x,y)}{\partial y} = \frac{3\sqrt{2}\pi y^{2}}{4(\sqrt{2}-1)} \sin \left( \pi\left(\frac{ x^{3} - y^{3}}{4}\right) \right)  \geq 0.
	\end{eqnarray*}

From this results, 	the conditions $\zeta_{\mathcal{G}}(a) \leq \eta_{\mathcal{G}}(a)$ and $\zeta_{\mathcal{F}}(a) \leq \zeta_{\mathcal{G}}(a)$, $\eta_{\mathcal{F}}(a) \geq \eta_{\mathcal{G}}(a)$ hold. Hence, $U(\zeta_{\mathcal{F}}(a), \eta_{\mathcal{F}}(a)) \leq U(\zeta_{\mathcal{G}}(a), \eta_{\mathcal{G}}(a))$.\\

For $x \geq y$, $Z(x,y)=\left[\left\{ \sqrt{2} \cos\left(\pi\left(\frac{ y^{3} - x^{3}}{4} \right)  \right)-1	 \right\}\frac{1}{\sqrt{2}-1}	\right]$. It must be proven that $V(x,y)$ increases with $x$ and decreases with $y$. Then,

	\begin{eqnarray*}
	\frac{\partial Z(x,y)}{\partial x} = \frac{3\sqrt{2}\pi y^{2}}{4(\sqrt{2}-1)}\sin \left(\pi\left(\frac{ y^{3} - x^{3}}{4}\right) \right) \geq 0,\\
	\frac{\partial Z(x,y)}{\partial y} = \frac{-3\sqrt{2}\pi x^{2}}{4(\sqrt{2}-1)} \sin \left(\pi\left(\frac{ y^{3} - x^{3}}{4}\right) \right) \leq 0.
\end{eqnarray*}
That is,
	the conditions $\zeta_{\mathcal{G}}(a) \geq \eta_{\mathcal{G}}(a)$ and $\zeta_{\mathcal{F}}(a) \geq \zeta_{\mathcal{G}}(a)$, $\eta_{\mathcal{F}}(a) \leq \eta_{\mathcal{G}}(a)$ hold.  Hence, $U(\zeta_{\mathcal{F}}(a), \eta_{\mathcal{F}}(a)) \leq U(\zeta_{\mathcal{G}}(a), \eta_{\mathcal{G}}(a))$. Therefore, $\mathcal{F} \leq \mathcal{G} \Rightarrow \mathcal{E}(\mathcal{F}) \leq \mathcal{E}(\mathcal{G})$.\\

Finally,
\begin{eqnarray*}
\mathcal{E}(\mathcal{F})=\frac{1}{n} \sum_{i=1}^{n}\left[\left\{ \sqrt{2} \cos\left(\pi\left(\frac{ \zeta_{\mathcal{F}}^{3}(a_{i}) - \eta_{\mathcal{F}}^{3}(a_{i})}{4} \right)  \right)-1	 \right\}\frac{1}{\sqrt{2}-1}	\right] = \\
\frac{1}{n} \sum_{i=1}^{n}\left[\left\{ \sqrt{2} \cos\left(\pi\left(\frac{ \eta_{\mathcal{F}}^{3}(a_{i}) - \zeta_{\mathcal{F}}^{3}(a_{i})}{4} \right)  \right)-1	 \right\}\frac{1}{\sqrt{2}-1}	\right] =\mathcal{E}(\mathcal{F}^{c}).
\end{eqnarray*}
\end{proof}

\section{Proposed Method}
By combining the cutting-edge FF-entropy, FF-PIPRECIA, and FF-MARCOS tools, this study suggests an integrated method for handling MCDM problems under FFSs. Using pseudo representations, the fundamental ideas of FFSs and the related MCDM approaches have been explained separately. \\

Let the sets of alternatives and criteria be given as follows, respectively: $A=\{A_{1}, A_{2}, \cdots, A_{m}\}$ and $K=\{K_{1}, K_{2}, \cdots, K_{n}\}$. A senior member assembled a group of $l$ experts ($S_{1}, S_{2}, \cdots, S_{l}$). \\

\textbf{LEVEL A:} Obtain the decision matrices based on experts. \\

Step 1:Identify the alternative, criteria and experts sets:\\

Step 2: Get the linguistic variables of experts and convert linguistic variables to FF-numbers(Table \ref{table000}). Hence, the expression for findnig the weight is given by

\begin{eqnarray}\label{weightDMR}
	\omega^{s}=\frac{1}{2}\left( \frac{\frac{1}{2}(\zeta_{F_{s}}^{3}-\eta_{F_{s}}^{3})+1}{\sum_{s=1}^{t}(\frac{1}{2}(\zeta_{F_{s}}^{3}-\eta_{F_{s}}^{3})+1)}+ \frac{s- \overline{SC}(s)+1}{\sum_{s=1}^{t}(s- \overline{SC}(s)+1)} \right),
\end{eqnarray}
where $\omega_{s} \geq 1$ and $\sum_{s=1}^{t}\omega_{s}=1$.

\begin{table}[htb!]
	\centering
	\caption{Scale Values according to FFN for experts}\label{table000}
	\begin{tabular}{ll}
		\hline
		Linguistic Terms & FFNs\\ \hline
		Absolutely Important (AI) & (1.00, 0.00)\\
		Very  Important (VI) & (0.85, 0.25)\\
		Important (I) & (0.70, 0.40)\\
		Medium (M) & (0.50, 0.50)\\
		Low (L) & (0.40, 0.70)\\
		Very Low (VL) & (0.25, 0.85)\\
		Unimportant (U) & (0.00, 1.00)\\
		\hline
	\end{tabular}
\end{table}

Step 3: Create the aggregated DEMA based on FF-numbers by Equation \ref{FFWA} (or \ref{FFWG}) with DEs weights.\\

\textbf{LEVEL B:} Find the weights.\\

Step 4: Compute the entropy on FFSs of the criterion by Equation \ref{newentropy}.\\

Step 5: Calculate the weight of the criterion by Equation \ref{weight2}.

\begin{eqnarray} \label{weight2}
	\omega_{j}^{O}=\frac{1-\mathcal{E}_{j}}{\sum_{j=1}^{n}(1-\mathcal{E}_{j})}
\end{eqnarray}
where $\mathcal{E}_{j}$ represented the entropy.

\textbf{LEVEL C:} PIPRECIA method.\\

%%%%PIPRECIA%%%%%
For subjective weights, we employ the FF-PIPRECIA model. This approach begins by considering the relevant evaluation criteria and applying the FF-score function to rate their significance.\\

Step 6: DMs evaluate the criteria to ascertain their relative relevance, starting with the second criterion:

\begin{eqnarray}\label{rating}
	s_{j}= \left\{ \begin{array}{cll}
		1+[\overline{SC}(U_{j})-\overline{SC}(U_{j-1})]&, & \quad \textit{if} \quad U_{j} > U_{j-1},\\
		1&, & \quad \textit{if} \quad U_{j} = U_{j-1},\\
		1-[\overline{SC}(U_{j-1})-\overline{SC}(U_{j})]&, & \quad \textit{if} \quad U_{j} < U_{j-1},
	\end{array} \right.
\end{eqnarray}
where $U_{j}$ and $U_{j-1}$ symbolize the significance rating of the criterion $j$th and $(j-1)$th criterion, respectively.\\

Step 7: Based on relative significance, compute the coefficient by
\begin{eqnarray}\label{coef}
	K_{j}= \left\{ \begin{array}{cll}
		1&, & \quad \textit{if} \quad j=1,\\
		2-s_{j}&, & \quad \textit{if} \quad j>1.
	\end{array} \right.
\end{eqnarray}

Step 8: Determine the initial weight by

\begin{eqnarray}\label{iniwe}
	Q_{j}= \left\{ \begin{array}{cll}
		1&, & \quad \textit{if} \quad j=1,\\
		\frac{Q_{j-1}}{K_{j}}&, & \quad \textit{if} \quad j>1.
	\end{array} \right.
\end{eqnarray}

Step 9: Obtain the subjective weight of $j$th criterion by

\begin{eqnarray}\label{subweight}
	\omega_{j}^{s}=\frac{Q_{j}}{\sum_{j=1}^{n}Q_{j}}, \quad \forall j.
\end{eqnarray}

Step 10: An integrated weight-determining model is presented as
\begin{eqnarray}\label{integrated}
	\omega_{j}= \alpha 	\omega_{j}^{O} + (1-\alpha)	\omega_{j}^{s}
\end{eqnarray}
to get the benefits of objective and subjective weighting models, where $j=1,2,\cdots,n$ and $\alpha \in [0,1]$ represents the strategic coefficient to assess the changes of criterion weights.\\

%%%%%MARCOS%%%%%

\textbf{LEVEL D:} MARCOS method.\\

The relationship between options and the ideal and negative-ideal alternatives on FF data is explained by the FF-MARCOS method.\\ 

Step 11: The combined FF-decision matrix was normalized by

\begin{eqnarray}\label{normalized}
	N_{ij}= \left\{ \begin{array}{cll}
		(\zeta_{ij}, \zeta_{ij})&, & \quad \textit{for benefit-type criteria},\\
		(\eta_{ij}, \eta_{ij})&, & \quad \textit{for cost-type criteria}.
	\end{array} \right.
\end{eqnarray}

Step 12: Computing positive ideal solutions and negative ideal solutions by
\begin{eqnarray*}
	N_{j}^{+}=\max_{i}N_{ij} \quad \textit{and} \quad 	N_{j}^{-}=\min_{i}N_{ij}.
\end{eqnarray*}

Step 13: Find the weighted normalized FF-decision matrix using the Equation \ref{WNFF}.\\

\begin{eqnarray}\label{WNFF}
% \nonumber % Remove numbering (before each equation)
  P_{ij}^{\omega} = N_{ij} \times \omega_{j}=\left(\zeta_{N_{ij}}^{\omega_{i}}, \sqrt[3]{1 - (1-\eta_{N_{ij}}^{3})^{\omega_{i}}}\right)
\end{eqnarray}

Step 14: Obtain the weighted sum of each option using score function by

\begin{eqnarray}\label{weightedsum}
	S_{i}=\sum_{j=1}^{n} \overline(SC)(N_{ij}),
\end{eqnarray}
where $\overline(SC)(N_{ij})$ signifies the score values of each element of the weighted normalized IVFF-decision matrix.\\

Step 15: Computing the utility degree with the following equations:
\begin{eqnarray}\label{utility}
	U_{i}^{-}=\frac{S_{i}}{S_{PIS}} \quad \textit{and} \quad U_{i}^{+}=\frac{S_{i}}{S_{NIS}},
\end{eqnarray}
where $S_{PIS}$ and $S_{NIS}$ signify the sum of score degrees of weighted values $N_{j}^{+}$ and $N_{j}^{-}$.\\

Step 16: For $f(U_{i}^{+})=\frac{U_{i}^{-}}{U_{i}^{-}+U_{i}^{+}}$ and $f(U_{i}^{-})=\frac{U_{i}^{+}}{U_{i}^{-}+U_{i}^{+}}$, the final values of utility functions by

\begin{eqnarray}\label{finval}
	f(U_{i})=\frac{U_{i}^{+}+U_{i}^{-}}{1+\frac{1-f(U_{i}^{+})}{f(U_{i}^{+})}+\frac{1-f(U_{i}^{-})}{f(U_{i}^{-})}}.
\end{eqnarray}

Step 17: Use Equation \ref{finval} to rank the options. The maximum values derived from Equation \ref{finval} are the right option.

%\begin{figure}
%	\centering
%	\includegraphics[width=13cm]{yeni_diag.jpg}
%	\caption{Flowchart of method}\label{flwchrt}
%\end{figure}
%----------------------

%-----------------------------

\section{Analysis of  Energy Poverty in T\"{u}rkiye}

\textbf{Step 1: }
This study will use an integrated MCDM technique to regionally rank energy poverty in 7 regions of T\"{u}rkiye (Marmara, Aegean, Mediterranean, Central Anatolia, Eastern Anatolia, Southeastern Anatolia, Black Sea) based on the following factors:\\

Factors: A-Income level; B-Energy price; C-Energy efficiency; D-Building efficiency; E-Climate; F-Urbanization.\\

The template for the questions asked of the experts is as follows:\\

"What is the significance of the $[F]$'s impact on energy poverty in the $[R]$ Region?"\\

The content of expressions $[F]$ and $[R]$ in this question is as follows:\\
$[R]$ - $R_{1}$: Marmara, $R_{2}$: Ege, $R_{3}$: Karadeniz, $R_{4}$: Akdeniz, $R_{5}$: Doğu Anadolu, $R_{6}$: Güneydoğu Anadolu, $R_{7}$: İç Anadolu\\
$[F]$ -  $A$: Income level, $B$: Energy price, $C$: Energy efficiency, $D$: Building efficiency, $E$: Climate, $F$: Urbanization\\

The 42 questions asked for each region and each factor are arranged as follows: $A_{1}-A_{7}$,$\cdots$, $F_{1}-F_{7}$.\\

\textbf{Step 2:} $U_{1}=AI$, $U_{2}=VI$, and $U_{3}=VI$ are the language variables representing the experts' weights. Equation \ref{weightDMR} is used to determine the weights of the DEs, and $\omega_{s}=\{0.36, 0.32, 0.32\}$ is the result.\\

\textbf{Step 3:} Use Equation \ref{FFWA} to create the aggregated IVFF-DEMA using FF-numbers (Tables \ref{t1}, \ref{t2}). \\

\begin{table}[htb!]
	\centering
	%\tiny
	\caption{Linguistic Terms}\label{t1}
	\begin{tabular}{lllllll}
		\hline
			& A & B & C & D & E & F \\ \hline
	$R_{1}$	& (VI, I, I) & (AI, I, M) & (I, M, L) & (L, VI, VI) & (M, M, L) & (AI, L, M)\\
	$R_{2}$ & (I, I, VI)  & (VI, I, I) & (M, L, L) & (L, M, L) & (L, M, L) & (M, M, M)\\
	$R_{3}$ & (I, I, VI) & (I, I, I) & (M, M, M) & (L, L, L) & (I, L, M) & (M, M, M)\\
	$R_{4}$ & (M, I, VI) & (M, I, I) & (VL, M, M) & (VL, M, L) & (VL, L, L) & (M, L, M)\\
	$R_{5}$ & (VI, I, I) & (I, I, VI) & (I, M, M) & (I, M, L) & (VI, L, L) & (L, M, I)\\
	$R_{6}$ & (M, I, VI) & (M, I, I) & (L, M, M) & (L, L, M) & (L, I, I) & (L, M, M)\\
	$R_{7}$ & (I, I, I) & (VI, I, M) & (I, M, I) & (I, I, I) & (VI, M, M) & (L, M, L)\\
					\hline
	\end{tabular}
\end{table}

\begin{table}[htb!]
	\centering
	%\tiny
	\caption{Aggregated DEMA}\label{t2}
	\begin{tabular}{lllllll}
		\hline
		& A & B & C & D & E & F \\ \hline
		$R_{1}$ & (0.34, 0.92) & (0.00, 1.00) & (0.51, 0.82) & (0.36, 0.91) & (0.56, 0.78) & (0.00, 1.00)\\
		$R_{2}$ & (0.34, 0.91)  & (0.34, 0.92) & (0.62, 0.76) & (0.63, 0.76) & (0.63, 0.76) & (0.50, 0.80)\\
		$R_{3}$ & (0.91, 0.34) & (0.89, 0.40) & (0.80, 0.50) & (0.74, 0.70) & (0.82, 0.51) & (0.80, 0.50)\\
		$R_{4}$ & (0.90, 0.37) & (0.86, 0.43) & (0.75, 0.61) & (0.73, 0.67) & (0.70, 0.75) & (0.78, 0.56)\\
		$R_{5}$ & (0.92, 0.34) & (0.91, 0.34) & (0.84, 0.46) & (0.82, 0.51) & (0.86, 0.48) & (0.82, 0.53)\\
		$R_{6}$ & (0.90, 0.37) & (0.86, 0.43) & (0.78, 0.56) & (0.76, 0.63) & (0.85, 0.49) & (0.78, 0.56)\\
		$R_{7}$ & (0.89, 0.40) & (0.90, 0.36) & (0.86, 0.43) & (0.89, 0.40) & (0.88, 0.40) & (0.75, 0.63)\\
		\hline
	\end{tabular}
\end{table}

\textbf{Step 4:} Obtain the entropy.  $\mathcal{E}_{A}=8.26$, $\mathcal{E}_{B}=7.53$,   $\mathcal{E}_{C}=6.41$, $\mathcal{E}_{D}=7.24$, $\mathcal{E}_{E}=6.90$, and $\mathcal{E}_{F}=5.01$.\\

\textbf{Step 5:} Using the Equation \ref{weight2}. Then obtain the weights of all criteria: $\omega^{O}_{A}=0.204$, $\omega^{O}_{B}=0.184$, $\omega^{O}_{C}=0.158$,  $\omega^{O}_{D}=0.175$, $\omega^{O}_{E}=0.166$ and $\omega^{O}_{F}=0.113$.\\

%%%%%%%%%%%%%%%%%%%%%%%%%%%%%%%%%%%

The FF-PIPRECIA model will be used to determine the subjective weight of the criteria. The subjective weights were calculated using Equations \ref{rating}–\ref{iniwe}, and the results are displayed in Tables \ref{t4} and \ref{t5}. \\

\begin{table}[htb!]
	\centering
	\caption{Significance ratings of criteria}\label{t4}
	\begin{tabular}{lllllc}
		\hline
		& $S_{1}$ & $S_{2}$ & $S_{3}$ & aggregated values & crisp values\\ \hline
		A & VI & AI & VI &  (1.00, 0.00)  & 0.75\\
		B & VI & AI & I  &(1.00, 0.00)& 0.75\\
		C & L &  L & L &(0.74, 0.70)& 0.52\\
		D & I & VI & I &(0.91, 0.34)& 0.68\\
		E & M & M & I  &(0.83, 0.47)& 0.62\\
		F & VL & VL & VL  &(0.63, 0.85)& 0.41\\
		 \hline
	\end{tabular}
\end{table}

\begin{table}[htb!]
	\centering
	\caption{The weight of different alternative using FF-PIPRECIA}\label{t5}
	\begin{tabular}{llllll}
		\hline
		& Crisp degrees & $s_{j}$ & $\kappa_{j}$ & $Q_{j}$ & $\omega_{j}^{S}$\\ \hline
		A & 0.75  & -     & 1.000   & 1.000    & 0.206\\
		B & 0.75  & 1.000 & 1.000   & 1.000    & 0.206\\
		C & 0.52  & 0.770 & 1.230   & 0.813    & 0.167\\
		D & 0.68  & 0.840 & 1.160   & 0.701    & 0.144\\
		E & 0.62  & 0.940 & 1.060   & 0.661    & 0.136\\
		F & 0.41  & 1.790 & 1.121   & 0.690    & 0.142\\
 \hline
	\end{tabular}
\end{table}

Additionally, for $\alpha=0.5$ and $\omega_{j}=\{0.205, 0.195,  0.1625, 0.1595, 0.151, 0.1275\}$, the total weight of each criterion based on the FF-entropy and the FF-PIPRECIA model is assessed.\\

The relationship between options and the ideal and negative-ideal alternatives on FF data is explained by the FF-MARCOS method. The aggregated FF-decision matrix is converted into a normalized aggregated FF-decision matrix (Table \ref{t6}) since criteria A and B are non-beneficial, while the remaining criteria are advantageous. \\

\begin{table}[htb!]
	\centering
	%\tiny
	\caption{Normalized Aggregated DEMA}\label{t6}
	\begin{tabular}{lllllll}
		\hline
		& A & B & C & D & E & F \\ \hline
		$R_{1}$ & (0.92, 0.34) & (0.00, 1.00) & (0.51, 0.82) & (0.36, 0.91) & (0.56, 0.78) & (0.00, 1.00)\\
		$R_{2}$ & (0.91, 0.34)  & (0.34, 0.92) & (0.62, 0.76) & (0.63, 0.76) & (0.63, 0.76) & (0.50, 0.80)\\
		$R_{3}$ & (0.34, 0.91) & (0.40, 0.89) & (0.80, 0.50) & (0.74, 0.70) & (0.82, 0.51) & (0.80, 0.50)\\
		$R_{4}$ & (0.37, 0.90) & (0.43, 0.86) & (0.75, 0.61) & (0.73, 0.67) & (0.70, 0.75) & (0.78, 0.56)\\
		$R_{5}$ & (0.34, 0.92) & (0.34, 0.91) & (0.84, 0.46) & (0.82, 0.51) & (0.86, 0.48) & (0.82, 0.53)\\
		$R_{6}$ & (0.37, 0.90) & (0.43, 0.86) & (0.78, 0.56) & (0.76, 0.63) & (0.85, 0.49) & (0.78, 0.56)\\
		$R_{7}$ & (0.40, 0.89) & (0.36, 0.90) & (0.86, 0.43) & (0.89, 0.40) & (0.88, 0.40) & (0.75, 0.63)\\
		\hline
	\end{tabular}
\end{table}

\begin{table}[htb!]
	\centering
	%\tiny
	\caption{Weighted Normalized Aggregated DEMA}\label{t7}
	\begin{tabular}{lllllll}
		\hline
		& A & B & C & D & E & F \\ \hline
		$R_{1}$ & (0.64, 0.80) & (1.00, 0.00) & (0.50, 0.90) & (0.59, 0.85) & (0.45, 0.92) & (1.00, 0.00)\\
		$R_{2}$ & (0.63, 0.80)  & (0.63, 0.81) & (0.45, 0.93) & (0.44, 0.93) & (0.44, 0.93) & (0.44, 0.92)\\
		$R_{3}$ & (0.63, 0.80) & (0.60, 0.84) & (0.48, 0.90) & (0.43, 0.94) & (0.48, 0.90) & (0.44, 0.92)\\
		$R_{4}$ & (0.62, 0.82) & (0.56, 0.85) & (0.45, 0.92) & (0.42, 0.94) & (0.39, 0.96) & (0.43, 0.93)\\
		$R_{5}$ & (0.64, 0.80) & (0.62, 0.81) & (0.51, 0.88) & (0.50, 0.90) & (0.52, 0.90) & (0.46, 0.92)\\
		$R_{6}$ & (0.62, 0.82) & (0.56, 0.85) & (0.46, 0.91) & (0.44, 0.93) & (0.51, 0.90) & (0.43, 0.93)\\
		$R_{7}$ & (0.60, 0.83) & (0.61, 0.82) & (0.53, 0.87) & (0.56, 0.86) & (0.54, 0.87) & (0.41, 0.94)\\
		\hline
	\end{tabular}
\end{table}

The following are ideal solutions, both positive and negative:

\begin{eqnarray*}
	N_{j}^{+}&=&\Big( (1.00, 0.00), (0.63, 0.81), (0.63, 0.80), (0.62, 0.82), (0.64, 0.80), (0.62, 0.82), (0.61, 0.82)	\Big),\\
		N_{j}^{-}&=&\Big( (0.45, 0.92), (0.44, 0.92), (0.43, 0.94), (0.39, 0.96), (0.46, 0.92), 	(0.43, 0.93), (0.41, 0.94)\Big).
\end{eqnarray*}

Using $N_{j}^{+}, N_{j}^{-}$ and Table \ref{t7}, the IVFF-score degree of each option,  $N_{j}^{+}$,  and $N_{j}^{-}$  are determined and shown in Table \ref{t8}.

\begin{table}[htb!]
	\centering
	\caption{Score Values}\label{t8}
	\begin{tabular}{lllllllll}
		\hline
					& A & B & C & D & E& F& $N_{j}^{+}$ & $N_{j}^{-}$ \\ \hline
			$R_{1}$ 	& 0.44 & 0.75 & 0.35 & 0.40 & 0.33 & 0.75 & 0.75 & 0.33\\
			$R_{2}$ 	& 0.43 & 0.43 & 0. 32& 0.32 & 0.32 & 0.33 & 0.43 & 0.33  \\
			$R_{3}$ 	& 0.43 & 0. 43& 0.35 & 0.31 & 0.35 & 0.33 & 0.43 &0.31\\
			$R_{4}$ 	& 0.43 & 0.39 & 0. 33& 0.31 & 0.30 & 0.32 & 0.42  &0.30\\
			$R_{5}$ 	& 0.44 & 0.43 & 0.36 & 0.35 & 0.35 & 0.33 & 0.44  &0.33\\
			$R_{6}$	 & 0.42 & 0.40 & 0.34 & 0.32 & 0.35 & 0.32  & 0.42 &0.32\\
			$R_{7}$ 	& 0.41 & 0.42 & 0.42 & 0.38 & 0.37 & 0.31 & 0.42 &0.31\\
 \hline
	\end{tabular}
\end{table}

Using the Equation \ref{weightedsum}, $S_{1}=2.385$, $S_{2}=1.55$, $S_{3}=1.74$, $S_{4}=2.62$, $S_{5}=3.15$, $S_{6}=2.61$, $N_{j}^{+}=2.15$, $N_{j}^{-}=3.05$ are obtained. Therefore, from utility function values(Table \ref{t9}), the prioritization of options is $E>D>F>A>C>B$, and $E$ is the best choice.

\begin{table}[htb!]
	\centering
	\caption{Utility Degrees and Utility Functions}\label{t9}
	\begin{tabular}{lllll}
		\hline
				& $U_{i}^{-}$ & $U_{i}^{+}$ & $f(U_{i})$ & Ranking  \\ \hline
$R_{1}$ 	& 0.63 & 0.93 & 0.3172 & 1 \\
$R_{2}$ 	& 0.65 & 0.96 & 0.3170 & 2 \\
$R_{3}$ 	& 0.91 & 1.35 & 0.3165 & 3\\
$R_{4}$ 	& 0.70 & 1.04 & 0.3168 & 5\\	
$R_{5}$  	& 0.66 & 0.99 & 0.3154 & 7 \\
$R_{6}$ 	& 0.68 & 1.01 & 0.3162 & 6\\
$R_{7}$ 	& 0.62 & 0.92 & 0.3166 & 4\\

\hline
	\end{tabular}
\end{table}

An examination of energy poverty by region in T\"{u}rkiye yielded the following ranking: $R_{1} > R_{2} > R_{4} > R_{7} > R_{3} > R_{6} > R_{5}$. So, if we read this ranking from highest to lowest, we have the areas listed as follows: Marmara, Aegean, Mediterranean, Central Anatolia, Black Sea, Southeastern Anatolia, and Eastern Anatolia.

\section{Discussion}
The study's Application Section on energy poverty and the evaluation results for seven regions in T\"{u}rkiye can be explained as follows: The fact that Eastern and Southeastern T\"{u}rkiye are at the top and the Marmara and Aegean regions are at the bottom is an extremely robust result.\\

The reasons why the Eastern Anatolia region is identified as the region with the highest risk of energy poverty can be explained as follows: Income level is far below the Turkish average, the impact of energy prices is very high relative to income, building efficiency is generally poor, there is a widespread stock of old and uninsulated buildings, the climate is frigid, resulting in very high heating needs, urbanization is usually scattered, infrastructure is weak, and energy efficiency is low due to low technology and economic structure. Since all these criteria work negatively in the same direction, Eastern Anatolia is the most at-risk region within T\"{u}rkiye.\\

Examining the energy poverty in the Southeastern Anatolia region reveals that it is primarily caused by low income levels, high energy prices relative to income, low building efficiency, an extremely hot climate leading to energy shortages related to cooling, rapid but unplanned urbanization, and a limited energy-efficient industrial and technological infrastructure. In this region, energy poverty stemming from cooling rather than heating are more prominent.\\

The reasons for energy poverty in the Black Sea region include low income in middle-income and rural areas, the relatively high impact of energy prices, inadequate building insulation despite the humid climate, a long heating season, scattered urbanization, and high infrastructure costs due to the region's structure. Income is better than in the East, but climate and building factors increase energy poverty.\\

The Central Anatolia region is home to the country's capital. In terms of energy poverty, the region's income level is close to the Turkish average. Energy prices and building efficiency are moderate. However, the climate is frigid, and winters are harsh, resulting in high heating needs. While urbanization is high, it is relatively balanced. Energy poverty is moderate, with risks concentrated in specific sub-regions.\\

Income in the Mediterranean region is better than the Turkish average. Energy prices are relatively manageable. Building efficiency varies from place to place (there are significant differences between coastal and inland areas). Heating needs are low due to the mild climate. Urbanization has developed due to tourism and has a better infrastructure. Energy poverty is regional and seasonal.\\

Income levels are high in the Aegean region, while the impact of energy prices is relatively low. Building efficiency is above the Turkish average, and the climate is mild. Urbanization is generally orderly, and the infrastructure is strong. Energy poverty is limited and localized.\\

Marmara is the region with the lowest risk of energy poverty. Income levels in the Marmara region are the highest in T\"{u}rkiye. Energy prices impose a relatively low burden on income, while energy efficiency is the most developed in this area. Building efficiency is relatively high, and although urbanization is very dense, the infrastructure is strong. Poverty may exist in the region, but energy poverty is structurally limited.

\subsection{Comparison}
This analysis compares three objective weighting approaches: Proposed entropy (Cosinus-based Fermatean Fuzzy Entropy), Shannon-type Fermatean Fuzzy Entropy, and Linear (classical) Fermatean Fuzzy Entropy. For each entropy measure, objective criterion weights are recalculated, the FF-MARCOS method is applied as is, regional rankings are obtained, and a comparison is made with the proposed model.\\

The entropy of FSs measures the degree of fuzziness between FSs. The axiom construction for the entropy of FSs was initially presented by De Luca and Termini \cite{deluca} in relation to Shannon's probability entropy. A statistical metric frequently used to describe complex systems is the Shannon entropy. It can identify nonlinear features in model series, which helps provide a more trustworthy explanation of the nonlinear dynamics at various stages of analysis. This enhances understanding of the nature of complex systems characterized by complexity and nonequilibrium. The majority of complex systems are characterized by heterogeneous distributions of linkages, in addition to their complexity and nonequilibrium nature. Shannon utilized the concept of entropy, known as Shannon entropy, in information theory for data transmission in computer science. According to this theory, the division of the symbol logarithm in the alphabet by the entropy yields the mean value of the shortest options needed to code a message. \\

For probabilities $p_{1}, p_{2}, \cdots, p_{n}$, $X$ random vairable is deifned as
\begin{eqnarray*}
	H(p)=-\sum_{i=1}^{n}p_{i} \log_{2}p_{i}.
\end{eqnarray*}
$H(x)$ is defined as the entropy of $X$ random variable. \\

Let $X = \{x_{1}, x_{2},\cdots, x_{n}\}$ be a finite universe of discourses. Thus, for a FFS $\mathcal{F}$ in $X$, we propose a probability mass function $p=\{p_{1}, p_{2}, \cdots, p_{k}\}$ the following probability-type entropy for the FFS $\mathcal{F}$ with

\begin{eqnarray*}
	H(p)=-\frac{1}{n}\sum_{i=1}^{n} \left( \zeta_{\mathcal{F}}^{3}\log \zeta_{\mathcal{F}}^{3}(x_{i})+ \eta_{\mathcal{F}}^{3}\log \eta_{\mathcal{F}}^{3}(x_{i}) + h_{\mathcal{F}}^{3}\log h_{\mathcal{F}}^{3}(x_{i}) \right).
\end{eqnarray*}

For a FFS $\mathcal{F}$ in $X$, the FF-entropy measure $\mathcal{E}: FFS(X) \rightarrow [0,1]$ is defined as
\begin{eqnarray*}
\mathcal{E}=1-\frac{1}{4n}\sum_{i=1}^{n} \Bigg[2 \left| \zeta_{\mathcal{F}}^{3} - \eta_{\mathcal{F}}^{3} \right| + \left| \zeta_{\mathcal{F}}^{6} - \eta_{\mathcal{F}}^{6} \right| + \left|2 \left( \zeta_{\mathcal{F}}^{3} - \eta_{\mathcal{F}}^{3} \right) +  \left( h_{\mathcal{F}}^{6} - \eta_{\mathcal{F}}^{6} \right)\right| \Bigg]
\end{eqnarray*}

To investigate the robustness of the proposed framework with respect to the objective weighting mechanism, an entropy-based comparative analysis was conducted. The proposed cosinus-based FF-entropy was compared with two alternative formulations: the Shannon-type FF-entropy and the linear FF-entropy. For each entropy model, objective criterion weights were recalculated, and the FF-MARCOS method was reapplied. The resulting regional rankings were then compared using Kendall’s $\tau$ correlation coefficient. The results indicate perfect rank consistency ($\tau = 1.000$) across all entropy formulations, demonstrating that the proposed model is fully insensitive to the choice of entropy definition. This confirms that the robustness of the proposed framework is not driven by a specific entropy structure, but rather by its integrated decision-making architecture.\\

The ranking results of the compared entropy measures and the proposed entropy measure are shown in Table \ref{tableRank}.

	\begin{table}[htb!]
	\centering
	\tiny
	\caption{Ranking}\label{tableRank}
	\begin{tabular}{ll}
		\hline
		Entropy Model &	Ranking  \\  \hline
	Proposed Entropy		& $R_{1} > R_{2} > R_{4} > R_{7} > R_{3} > R_{6} > R_{5}$	\\
	Shannon type FF-entropy		&	$R_{1} > R_{2} > R_{4} > R_{7} > R_{3} > R_{6} > R_{5}$ \\
	Classical FF-entropy	&	$R_{1} > R_{2} > R_{4} > R_{7} > R_{3} > R_{6} > R_{5}$	\\ \hline
	\end{tabular}
\end{table}

	\subsection{Sensitivity: }
\textbf{$\alpha$ parameter analysis:}

A sensitivity study has been performed on the range of values for the $\alpha$ parameter. Regional rankings of T\"{u}rkiye's energy poverty issue have been established. We considered a range of $\alpha$ values in the sensitivity study. This analysis aims to ascertain the functionality of the recently developed framework. DEs can evaluate how sensitive the introduced model is to parameter changes by altering the $\alpha$ parameter. The best choice, the $R_{1}$ Marmara region, remains the same for all parameter values, as indicated by the sensitivity analysis results presented in Table \ref{table25} and Figure \ref{fig:sens}.  The range of values for the $\alpha$ parameter has been subjected to a sensitivity analysis. The situation of energy poverty in T\"{u}rkiye has been examined by region, and regional rankings have been obtained. In the sensitivity study, we considered a range of $\alpha$ values. The purpose of this examination is to determine how well the newly created framework functions. By modifying the $\alpha$ parameter, DEs can assess the sensitivity of the introduced model to changes in the parameters. The sensitivity analysis results in Table \ref{table25} and Figure \ref{fig:sens} indicate that the optimal option, $R_{1}$ Marmara region, remains unchanged for all parameter values. Because of this, the examination of studies on energy poverty in T\"{u}rkiye by geographic area is dependent on and sensitive to $\alpha$ values. Consequently, the proposed model is sufficiently stable over a variety of parameter values.  Nevertheless, it is also observed that the outcomes for $\alpha=0.5$ and $\alpha=0.0, 0.1, 0.2, 0.4, 0.7, 0.9, 1.0$ are identical. Figure  \ref{fig:sens}  illustrates how the endurance of the proposed framework is enhanced by adjusting the parameter degrees.

	\begin{table}[htb!]
		\centering
		\tiny
		\caption{Sensitivity Analysis}\label{table25}
		\begin{tabular}{lccccccccccc}
			\hline
& $\alpha=0.0$& $\alpha=0.1$ & $\alpha=0.2$ & $\alpha=0.3$ & $\alpha=0.4$ & $\alpha=0.5$ & $\alpha=0.6$ & $\alpha=0.7$ & $\alpha=0.8$ & $\alpha=0.9$ & $\alpha=1.0$\\ \hline
$R_{1}$&	0.560	&	0.562	&	0.568	&	0.570	&	0.575	&	\textbf{0.581}	&	0.583	&	0.585	&	0.590	&	0.594	&	0.596\\
$R_{2}$&	0.388	&	0.390	&	0.396	&	0.404	&	0.405	&   \textbf{0.388}	&	0.415	&	0.383	&	0.390	&	0.424	&	0.428\\
$R_{3}$&	0.243	&	0.239	&	0.235	&	0.227	&	0.225	&   \textbf{0.217}	&	0,215	&	0.210	&	0.207	&	0.204	&	0.202\\
$R_{4}$&	0.426	&	0.421	&	0.420	&	0.416	&	0.414	&	\textbf{0.412}	&	0.399	&	0.419	&	0.380	&	0.385	&	0.381\\ \hline
		\end{tabular}
	\end{table}

\begin{figure}[htb]
	\centering
	\includegraphics[width=15cm]{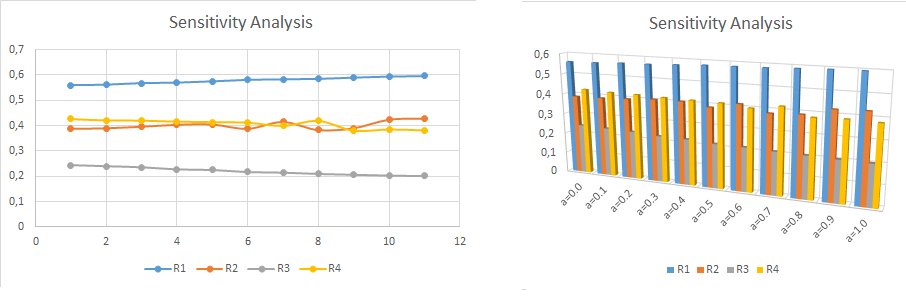}
	\caption{Sensitivity Analysis}\label{fig:sens}
\end{figure}

\textbf{Weight Perturbation Analysis:} A straightforward sequential parameter perturbation method is called Weight Perturbation \cite{jabri}. By calculating the change in error caused by a perturbation of a single parameter and adjusting that value accordingly, the approach sequentially updates the individual parameters. This method efficiently measures the gradient's components one after the other, requiring as many computational cycles as the system's parameters to fully understand the gradient. \\

Let's select  $\pm 10\%$ for Weight Perturbation Analysis. Initial criterion weights were $\omega={0.205,0.195,0.1625,0.1595,0.151,0.1275}$. Each criterion was considered individually. The relevant criterion weight was changed by $\pm 10\%$. The weights were renormalized. The regional ranking corresponding to the FF-MARCOS ranking was re-obtained. The agreement between the original ranking and the new ranking was measured using Kendall’s $\tau$ (Table \ref{tableKEN}).

	\begin{table}[htb!]
	\centering
	\tiny
	\caption{Kendall’s $\tau$ Results Table}\label{tableKEN}
	\begin{tabular}{lcc}
		\hline
Criterion &	Perturbation &	Kendall’s $\tau$ \\  \hline
Income			&	$+10\%$	&	0.429\\
Income			&	$-10\%$	&	0.429\\
Energy Price	&	$+10\%$	&	0.429\\
Energy Price	&	$-10\%$	&	0.429\\
Energy Efficiency & $+10\%$ & 0.429\\
Energy Efficiency  & $-10\%$ & 0.429\\
Building Efficiency & $+10\%$ & 0.429\\
Building Efficiency &$-10\%$& 0.429\\
Climate & $+10\%$& 0.429\\
Climate & $-10\%$& 0.429\\
Urbanization & $+10\%$& 0.429\\
Urbanization &$-10\%$ & 0.429\\ \hline
	\end{tabular}
\end{table}

To further examine the robustness of the proposed model, a weight perturbation analysis was conducted. Each criterion weight was independently increased and decreased by $\pm 10\%$, followed by re-normalization and re-evaluation of the FF-MARCOS ranking results. The consistency between the original and perturbed rankings was assessed using Kendall’s tau correlation coefficient. The results show that Kendall’s $\tau$ remains approximately 0.43 across all perturbation scenarios, indicating a moderate to high level of rank stability. Importantly, no drastic rank reversals were observed, demonstrating that the proposed framework is not overly sensitive to local changes in individual criterion weights and does not rely excessively on any single criterion.\\

This analysis allows us to clearly state the following: The model is robust to local weighting uncertainty. The results are not dragged by a single criterion. The FF-entropy + FF-PIPRECIA integration is stable.\\

\textbf{Entropy $\times$ Criterion Dominance Analysis:} Here, a methodological sensitivity analysis will be conducted as a robustness test. This analysis will answer the question: Does the relative dominance of the criteria change under different entropy definitions? In other words, the ranking may be stable, but is the effectiveness of each criterion sensitive to entropy selection?

For each entropy model, criterion dominance is defined using
\begin{eqnarray*}
	D_{j}^{(e)}=\frac{\omega_{j}^{(e)}}{\max_{i}\omega_{j}^{(e)}},
\end{eqnarray*}
where $\omega_{j}^{(e)}$ entropy model under criterion weighting $e$. Dominance scores are normalized to a range of [0, 1], with a score closer to 1 indicating a more dominant criterion.\\

	\begin{table}[htb!]
	\centering
	\tiny
	\caption{Entropy $\times$ Criterion Dominance}\label{tableDOM}
	\begin{tabular}{lcccccc}
		\hline
		Entropy Model &	Income & Energy Price & Energy Efficiency & Building Efficiency & Climate & Urbanization  \\  \hline
		Proposed Entropy		& 0.92 & 0.89 & 0.83 & 0.81 & 0.78 & 0.74	\\
		Shannon type FF-entropy		&	0.91 & 0.88 & 0.82 & 0.80 & 0.77 & 0.73 \\
		Classical FF-entropy	&	0.90 & 0.87 & 0.81 & 0.79 & 0.76 & 0.72	\\ \hline
	\end{tabular}
\end{table}

Table \ref{tableDOM} shows that the dominance ranking does not change according to entropy; income and energy price are dominant in all cases, and the sub-criteria decrease proportionally. Thus, it reveals a fairly consistent dominance structure across different entropy formulations. The relative influence of each criterion remains stable, and income and energy price consistently emerge as the most dominant factors under all entropy definitions.

\subsection{Implications}

\textbf{Theoretical Implications:} The results of this study reinforce the multidimensional nature of energy poverty. Thus, energy poverty studies theoretically strengthen the idea that energy poverty cannot be explained solely by income deficiency; we must consider together structural factors such as energy prices, building efficiency, climatic constraints, and spatial inequalities. Energy poverty is positioned as a subfield of the multidimensional welfare poverty literature, distinct from income-based poverty theories. Furthermore, this study will contribute to the deepening of demand-side energy poverty theory. Energy poverty analyses emphasize the role of demand-side constraints (inefficient housing, climate pressure, low technological adaptation) rather than supply-side constraints. Energy poverty is not a problem of “lack of access to energy,” but rather a problem of “inability to use energy sustainably.” Regional inequality theories can be enriched with an energy dimension. Regional energy poverty studies add a new energy-based dimension of inequality to the spatial economics and regional development literature. Under the same national energy policy, very different levels of energy prosperity can emerge in other regions. Ultimately, it will contribute to the fields of uncertainty and decision theory. Fuzzy sets, Bayesian models, prospect or consensus-based MCDM approaches provide theoretical contributions to the analysis of social problems that are difficult to measure and involve uncertainty, such as energy poverty. Energy poverty requires uncertainty-based decision theories that go beyond classical deterministic measurements.\\

The case of T\"{u}rkiye supports the idea that energy poverty is a "socio-technical-spatial phenomenon" and reveals the inadequacy of one-dimensional definitions (income-based poverty). Interregional differences (East–West, coastal–inland, urban–rural divide) indicate that energy poverty is spatially clustered. While national averages are generally used in the energy poverty literature, this study emphasizes the necessity of sub-national (regional) analyses. This strengthens the literature on regional energy justice and spatial energy poverty. Studies in T\"{u}rkiye indicate that energy poverty in low-income regions is not primarily due to high consumption, but rather to inefficient buildings and technologies. This finding supports approaches that view energy poverty as a problem of structural inefficiencies rather than "excess demand."\\

\textbf{Managerial Implications:} The study enables the design of targeted intervention strategies. Energy poverty analyses show public administrators and energy organizations that the "one-size-fits-all" approach is ineffective. The actionable insight from this situation is that building insulation should be prioritized in cold regions, while cooling efficiency and tariff adjustments should be prioritized in hot areas. Increased efficiency in resource allocation, along with energy poverty maps and rankings, enables the targeting of limited public resources to regions that will yield the highest marginal benefit. The actionable insight from this situation can be explained as follows: With the same budget, energy poverty can be reduced faster in regions where building efficiency investments are made. Examining social risk management for energy companies reveals that energy poverty poses collection risk, social acceptance risk, and regulatory pressure risk for energy distribution companies and retail suppliers. Flexible payment plans and social tariffs can be developed in low-income regions. Establishing performance monitoring and early warning mechanisms is crucial. Energy poverty indicators can be used as an early warning system. It can be predicted in advance in which regions energy price increases will lead to social vulnerability.\\

The study shows that energy poverty is not experienced uniformly across T\"{u}rkiye: heating-related poverty is observed in Eastern Anatolia, cooling and summer load-related poverty in Southeastern Anatolia, and long heating seasons and building inefficiencies in the Black Sea region. A "one-size-fits-all" approach to combating energy poverty proves unsuccessful. It has been observed that support based solely on income transfers provides short-term relief but does not permanently reduce energy poverty. Resources should be directed towards building renovation and efficiency investments instead of direct subsidies. Energy poverty is directly linked to the building stock, urban fabric, and local infrastructure. Municipalities and local governments should be positioned as implementing and targeting actors, rather than being independent of central policies.\\

\textbf{Policy Implications:} The integration of energy poverty into the social policy agenda will develop. Energy poverty findings indicate that energy policies should be integrated with social policies. Energy poverty is the responsibility not only of energy ministries but also of social policy institutions. Regional energy policies will be developed. Uniform energy policies at the national level may ignore regional differences. Heating subsidies in cold regions and cooling subsidies in the hot areas should be differentiated regionally. The social dimension of energy efficiency policies will be a key focus. Energy efficiency should be considered not only as an environmental tool but also as a tool for social justice. Building renovation programs for low-income households directly reduce energy poverty. Pricing and subsidy reforms will be implemented. Energy poverty analyses provide an opportunity to question how existing targeted subsidies are utilized and who benefits from them. Targeted and conditional support is more effective than general subsidies. Compliance with and work with climate policies will be ensured. Energy poverty is directly related to the concept of a just transition. Climate policies, such as carbon pricing, can generate social backlash if not designed with consideration for their impact on energy poverty.\\

In T\"{u}rkiye, energy poverty is often addressed in a fragmented manner, encompassing social assistance and energy subsidies. Energy poverty should be defined as a cross-sectoral area, addressing social policy, housing policy, and energy policy. The findings reveal that Eastern and Southeastern Anatolia are structurally disadvantaged. National energy poverty strategies should include regionally weighted funding and incentive mechanisms to address regional disparities. Energy prices should not be considered in isolation, but rather in conjunction with income, climate, and building efficiency. Regionally and climate-sensitive tariffs, as well as social tariff applications, are more effective in reducing energy poverty.\\

Actionable Insights can be summarized as follows: In the short term (1-2 years), energy poverty maps should be created, and seasonal energy subsidies should be provided through social tariffs and billing. In the medium term (3-5 years), insulation and building renovations should be initiated in low-income homes, and regional energy efficiency programs should be established with support for energy-efficient appliances. In the long term (5+ years), the integration of regional development and energy policies, a just energy transition framework, and the creation of institutional structures to monitor energy poverty can be defined.\\

Energy poverty analyses provide not only a diagnostic tool for identifying vulnerable regions and households, but also a strategic framework for designing equitable, efficient, and region-sensitive energy policies that align social welfare objectives with long-term energy transition goals. This study demonstrates that energy poverty in T\"{u}rkiye is not merely an income-based problem; it is a multidimensional policy area that requires consideration of regional, structural, and climatic factors together.

\subsection{Limitations}

\textit{Limitations of the Proposed Method:} \\

Expert-based subjectivity and judgment dependency: The proposed framework partially relies on expert judgments, particularly in the FF-PIPRECIA stage. Although Fermatean fuzzy sets are employed to mitigate subjectivity and vagueness, the results may still reflect experts’ cognitive biases, experience levels, and domain knowledge. Different expert panels could lead to variations in criteria weights and rankings. \\

Sensitivity to linguistic scale definition: The transformation of linguistic terms into Fermatean fuzzy numbers is based on predefined scale values. While these scales are consistent with the literature, alternative linguistic mappings or scale calibrations may influence entropy values, weighting outcomes, and final rankings.\\

Parameter dependency ($\alpha$ coefficient) :The integrated weighting mechanism depends on the strategic coefficient $\alpha$, which balances objective (entropy-based) and subjective (FF-PIPRECIA-based) weights. Although sensitivity analysis demonstrates robustness, the selection of $\alpha$ remains a decision-maker-driven process and may affect outcomes in certain contexts. \\

Computational and methodological complexity: Compared to classical MCDM methods, the proposed entropy–FF-PIPRECIA–FF-MARCOS framework is mathematically and computationally more complex. This may limit its direct applicability for practitioners without sufficient expertise in fuzzy decision-making or access to computational tools.\\

Static and deterministic evaluation structure: The proposed method operates within a static decision-making framework, assuming that criteria importance and expert evaluations remain constant during the analysis. In reality, energy systems and socio-economic conditions are dynamic, and temporal variability is not explicitly modeled.\\

Assumption of criteria independence: The framework implicitly assumes that evaluation criteria are independent. However, in energy poverty contexts, strong interdependencies may exist (e.g., income level and building efficiency, climate and energy consumption), which are not explicitly captured in the current formulation.\\

\textit{Limitations of the Application (Energy Poverty in T\"{u}rkiye):}\\

Regional aggregation bias: The analysis is conducted at the NUTS-1 (seven-region) level, which may mask significant intra-regional disparities. Energy poverty often varies substantially within regions, particularly between urban and rural areas or across income groups.\\

Lack of household-level empirical data: The application relies primarily on expert evaluations rather than household microdata (e.g., energy expenditures, consumption patterns, or self-reported poverty indicators). Consequently, the results reflect structural and expert-perceived vulnerability rather than directly observed household experiences.\\

Limited set of criteria: Although six key criteria are selected to represent energy poverty comprehensively, other potentially relevant factors—such as energy infrastructure reliability, renewable energy access, energy subsidy schemes, or behavioral aspects—are not included.\\

Context-specific generalizability: The findings are specific to T\"{u}rkiye’s socio-economic, climatic, and institutional context. The regional rankings and dominant drivers of energy poverty may not be directly transferable to countries with different energy markets, housing stocks, or welfare systems.\\

Temporal snapshot: The application reflects a single-period assessment and does not capture seasonal or long-term changes in energy poverty, such as those driven by energy price shocks, economic crises, or climate variability.\\

Policy implementation gap: While the study provides prioritization and strategic insights, it does not evaluate the feasibility, costs, or effectiveness of specific policy interventions. Thus, the translation of rankings into concrete policy actions requires additional economic and institutional analysis.\\

\textbf{These limitations do not undermine the validity of the proposed framework}; instead, they reflect the inherent complexity of energy poverty as a socio-technical phenomenon and highlight avenues for future methodological and empirical extensions.

\section{Conclusion}

This paper proposes an integrated decision-support framework based on FFSs to address the multifaceted and complicated nature of energy poverty. Recognizing that energy poverty cannot be adequately explained through income-based indicators alone, the study incorporates structural, climatic, and spatial dimensions into a unified analytical model. By explicitly accounting for uncertainty, vagueness, and subjectivity in expert evaluations, the proposed framework offers a more realistic and robust approach for analyzing energy poverty in real-world decision environments.\\

From a methodological perspective, the primary contribution of this research lies in the development of a novel Fermatean fuzzy entropy measure. The proposed entropy, constructed with a nonlinear cosinus-based structure, provides enhanced sensitivity in capturing the degree of uncertainty embedded in Fermatean fuzzy information. It is beneficial for complicated multi-criteria DM situations because, in contrast to current entropy measurements, it permits a more flexible and discriminative representation of fuzziness.  By integrating this entropy with the FF-PIPRECIA method for subjective weighting and the FF-MARCOS method for ranking alternatives, the study establishes a comprehensive and coherent Fermatean fuzzy MCDM framework that effectively combines objective data-driven insights with expert judgment.\\

The suggested model's practical applicability and analytical power are demonstrated by its empirical application to the regional evaluation of energy poverty in T\"{u}rkiye. The results reveal apparent geographical differences, with the Marmara and Aegean regions exhibiting the lowest levels of energy poverty, and Eastern and Southeastern Anatolia emerging as the most vulnerable. These findings demonstrate that energy poverty in T\"{u}rkiye is not homogeneous but instead spatially clustered and influenced by region-specific factors, including income levels, urbanization patterns, construction inefficiencies, and the harshness of the climate. A sensitivity study reveals that the ranking results remain essentially constant across various parameter settings, thereby confirming the robustness and stability of the proposed framework. \\

Beyond methodological advances, the study offers important theoretical, managerial, and policy implications. Theoretically, it reinforces the view of energy poverty as a socio-technical and spatial phenomenon, extending the literature on multidimensional welfare poverty and energy justice. Managerially, the results provide actionable insights for energy planners and utility companies by emphasizing the need for region-specific intervention strategies rather than uniform policy measures. From a policy perspective, the findings underscore the importance of integrating energy policy with housing, social policy, and regional development strategies to effectively mitigate energy poverty.\\

Despite its contributions, this study is subject to certain limitations. The analysis relies on expert-based evaluations and a predefined set of criteria, which, although appropriate for capturing complex qualitative judgments, may not fully reflect household-level heterogeneity. Additionally, the application focuses on regional-level analysis within a single country, which may limit the direct generalizability of the results to other national contexts with different institutional and energy system structures.\\

These limitations open several promising avenues for future research. First, future studies could extend the proposed framework by incorporating household-level microdata or combining expert judgments with empirical consumption and expenditure data. Second, dynamic and longitudinal analyses could be conducted to examine how energy poverty evolves over time under changing energy prices, climate conditions, and policy interventions. Third, the proposed Fermatean fuzzy entropy-based framework may be adapted and compared with other advanced uncertainty modeling approaches, such as Bayesian, rough set, or hybrid fuzzy–probabilistic models. Finally, applying the model to cross-country or multi-regional comparative studies would further validate its robustness and enhance its contribution to the global energy poverty and energy justice literature.\\

In conclusion, this study provides both a methodological advancement and a practical decision-support tool for the assessment of energy poverty under uncertainty. By offering a flexible, robust, and region-sensitive analytical framework, it contributes to the design of more equitable, efficient, and evidence-based energy policies aligned with long-term sustainability and just energy transition goals.

\section*{Ethics declarations}
\subsection*{Ethical approval}
 This article does not contain any studies with human participants or animals performed by any of the authors.

\subsection*{Funding Details}
 This research did not receive any specific grant from public, commercial, or not-for-profit funding agencies.

\subsection*{Conflict of interest}
 The authors declare that they have no known competing financial interests or personal relationships that could have appeared to influence the work reported in this paper.

\subsection*{Data Availability Statement}
 The manuscript has no associated data.

\subsection*{Authorship contributions} All authors equally contributed to the design and implementation of the research, the analysis of the results, and the writing of the manuscript.

%\section*{Author Contributions}
%This article was written with equal contributions from both authors. The final manuscript
%was read and approved by both authors.

%\section*{Conflict of Interests}
%The authors declare that they have no competing interests.


\begin{thebibliography}{99}

\bibitem{abbas} Abbas, Q., Hanif, I., Taghizadeh-Hesary, F., Iqbal, W.,  Iqbal, N.  Improving the energy and environmental efficiency for energy poverty reduction. Poverty reduction for inclusive sustainable growth in developing Asia, 2021;231--248.

\bibitem{alkez} Al Kez, D., Foley, A., Abdul, Z. K.,  Del Rio, D. F. Energy poverty prediction in the United Kingdom: A machine learning approach. Energy Policy, 2024;184:113909.

\bibitem{Atan} Atanassov, K. Intuitionistic fuzzy sets. \emph{Fuzzy Sets and Systems}. \textbf{1986}, \emph{20}, 87--96.

\bibitem{AtanGargov} K. Atanassov, G. Gargov, \emph{Interval valued intuitionistic fuzzy sets}, Fuzzy Sets and Systems, \textbf{31}(3) (1989), 343--349.

\bibitem{attri} Attri, S.D., Singh S., Dhar, A., Powar S., Multi-attribute sustainability assessment of wastewater treatment technologies using combined fuzzy multi-criteria decision-making techniques, Journal of Cleaner Production 357 (2022) 131849

\bibitem{R1} Aytekin A., Gorcun O.F., Ecer F., Pamucar D., Karamasa C. Evaluation of the pharmaceutical distribution and warehousing companies through an integrated Fermatean fuzzy entropy-WASPAS approach, Kybernetes, 52:5561--5592, 2022

\bibitem{ayyildiz} Ayyildiz, E., Erdogan M., Gul, M. A comprehensive risk assessment framework for occupational health and safety in pharmaceutical warehouses using Pythagorean fuzzy Bayesian networks, Engineering Applications of Artificial Intelligence 135 (2024) 108763


\bibitem{bela} Belaïd, F. Implications of poorly designed climate policy on energy poverty: Global reflections on the current surge in energy prices. Energy Research \& Social Science, 2022;92:102790.

\bibitem{boemi} Boemi, S. N.,  Papadopoulos, A. M.  Energy poverty and energy efficiency improvements: A longitudinal approach of the Hellenic households. Energy and Buildings, 2019;197:242--250.

\bibitem{board} Boardman, B.  Fixing fuel poverty: Challenges and solutions. Routledge. 2013, https://doi.org/10.4324/9781849774962

\bibitem{bour} Bouzarovski, S.,  Petrova, S.  A global perspective on domestic energy deprivation: Overcoming the energy poverty–fuel poverty binary. Energy Research \& Social Science, 2015;10:31--40. https://doi.org/10.1016/j.erss.2015.06.007

\bibitem{bouz} Bouzarovski, S.,  Simcock, N. Spatializing energy justice. Energy Policy, 2019;107:640--648. https://doi.org/10.1016/j.erss.2019.

\bibitem{casi} Casillas, C. E.,  Kammen, D. M.  The energy-poverty-climate nexus. Science, 2010;330(6008):1181--1182.

\bibitem{chakravarty} Chakravarty, S.,  Tavoni, M.  Energy poverty alleviation and climate change mitigation: Is there a trade off? Energy Economics, 2013;40:S67--S73. https://doi.org/10.1016/j.eneco.2013.09.022

\bibitem{chei} Cheikh, N. B., Zaied, Y. B.,  Nguyen, D. K. Understanding energy poverty drivers in Europe. Energy Policy, 2023;183:113818.

\bibitem{R2} Chang K.-H., Chung H.-Y., Wang C.-N., Lai Y.-D., Wu, C.-H. A New Hybrid Fermatean Fuzzy Set and Entropy Method for Risk Assessment. Axioms 12:58, 2023.

\bibitem{deluca} de Luca, A.,  Termini, S. A definition of a nonprobabilistic entropy in the setting of fuzzy sets theory, Information and Computation, 1971;20:301--312.

\bibitem{desva} Desvallées, L. (2022). Low-carbon retrofits in social housing: Energy efficiency, multidimensional energy poverty, and domestic comfort strategies in southern Europe. Energy Research \& Social Science, 2022;85:102413.

\bibitem{dong} Dong, K., Dou, Y.,  Jiang, Q.  Income inequality, energy poverty, and energy efficiency: Who cause who and how?. Technological Forecasting and Social Change, 2022;179:121622.

\bibitem{ehsan} Ehsanullah, S., Tran, Q. H., Sadiq, M., Bashir, S., Mohsin, M.,  Iram, R.  How energy insecurity leads to energy poverty? Do environmental consideration and climate change concerns matters. Environmental Science and Pollution Research, 2021;28(39):55041--55052.

\bibitem{erdoganmelike} M. Erdogan, E. Ayyildiz, Comparison of hospital service performances under COVID-19 pandemics for pilot regions with low vaccination rates, Expert Systems with Applications, Volume 206, 15 November 2022, 117773


\bibitem{evans} Evans, J., Robinson, C., Davies, S.  Energy inefficiency as a ‘poverty premium’. Energy Research \& Social Science, 2024;118:103824.

\bibitem{fabbari} Fabbri, K.,  Gaspari, J.  Mapping the energy poverty: A case study based on the energy performance certificates in the city of Bologna. Energy and Buildings, 2021;234:110718.

%\bibitem{temelentropy} Gandotra, N.; Kizielewicz, B.; Anand, A.; Baczkiewicz, A.; Shekhovtsov, A.;Watrobski, J.; Rezaei, A.; Sałabun,W. New Pythagorean Entropy Measure with Application in Multi-Criteria Decision Analysis. Entropy 2021, 23, 1600.

%\bibitem{R3} Gandotra N., Kizielewicz B., Anand A., Baczkiewicz A., Shekhovtsov A., Watrobski J., Rezai A., Salabun W. New Pythagorean Entropy Measure with Application in Multi-Criteria Decision Analysis, Entropy, 23:1600, 2021.

\bibitem{gargetal} Garg H, Shahzadi G, Akram M. Decision-Making Analysis Based on Fermatean Fuzzy Yager Aggregation Operators with Application in COVID-19 Testing Facility. Mathematical Problems in Engineering, Volume 2020, Article ID 7279027, https://doi.org/10.1155/2020/7279027

\bibitem{guanetal}Guan, Y., Yan, J., Shan, Y., Zhou, Y., Hang, Y., Li, R., ...  Hubacek, K. Burden of the global energy price crisis on households. Nature Energy, 2023;8(3):304--316.

\bibitem{hasanuj} Hasanujzaman, M.,  Omar, M. A. Household and non-household factors influencing multidimensional energy poverty in Bangladesh: demographics, urbanization and regional differentiation via a multilevel modeling approach. Energy Research \& Social Science, 2022;92:102803.

\bibitem{hashem} Hasheminasab, H., Streimikiene, D.,  Pishahang, M. (2023). A novel energy poverty evaluation: Study of the European Union countries. Energy, 264, 126157

\bibitem{hills} Hills, J.  Getting the measure of fuel poverty: Final report of the Fuel Poverty Review. CASE Report 72, 2012, https://doi.org/10.13140/RG.2.1.4876.4244

\bibitem{jabri} Jabri, M., Flower, B.  Weight Perturbation: An Optimal Architecture and Learning Technique for Analog VLSI Feedforward and Recurrent Multilayered Networks, IEEE Trans. Neural Networks, 1992;3(1):154--157.

\bibitem{Jeevaraj} Jeevaraj S. Ordering of interval-valued Fermatean fuzzy sets and their applications. Expert Systems with Applications, 2021;185:115613.

\bibitem{jessel} Jessel, S., Sawyer, S.,  Hernández, D.  Energy, poverty, and health in climate change: a comprehensive review of an emerging literature. Frontiers in public health, 2019;7:357.

\bibitem{katoch} Katoch, O. R., Sharma, R., Parihar, S.,  Nawaz, A.  Energy poverty and its impacts on health and education: a systematic review. International Journal of Energy Sector Management, 2024;18(2):411--431.

\bibitem{kirisci2019} Kiri\c{s}ci M. Fibonacci statistical convergence on intuitionistic fuzzy normed spaces, Journal of Intelligent $\&$ Fuzzy Systems, 2019;36:5597--5604.

\bibitem{kirisciNew} Kiri\c{s}ci M., Correlation Coefficients of Fermatean Fuzzy Sets with Their Application, J. Math. Sci. Model. 2022;5(2):16--23. 

%\bibitem{Electre_paper} Kiri\c{s}ci M., Demir I., Simsek N. Fermatean fuzzy ELECTRE multi-criteria group decision-making and most suitable biomedical material selection. Artificial Intelligence in Medicine, 2022;127:102278.

\bibitem{kirisciIVFFL} Kiri\c{s}ci M. Data Analysis for Lung Cancer: Fermatean Hesitant Fuzzy Sets Approach, Applied Mathematics, Modeling and Computer Simulation, 2022;30.701--710.

\bibitem{kirisciKFCA} Kiri\c{s}ci M., Simsek N. A novel kernel principal component analysis with application disaster preparedness of hospital: interval-valued Fermatean fuzzy set approach, The Journal of Supercomputing 2023;79:19848--19878.

\bibitem{kirisci1} Kiri\c{s}ci M. Fermatean Fuzzy Type Statistical Concepts with Medical Decision-Making Application, Fuzzy Optimization and Modeling Journal 2023:4(1); 1--14.

\bibitem{kirisci0} Kiri\c{s}ci M. Data analysis for panoramic X-ray selection: Fermatean fuzzy type correlation coefficients approach, Engineering Applications of Artificial Intelligence 2023:126;106824.

%\bibitem{kirisci2} Kiri\c{s}ci M. New cosine similarity and distance measures for Fermatean fuzzy sets and TOPSIS approach, Knowledge and Information Systems 2023:65(2); 855--868.

%\bibitem{R4} Kiri\c{s}ci M. Medical Waste Management Based on an Interval-Valued Fermatean Fuzzy Decision-Making Method, Journal of Mathematical Sciences and Modelling, 2024;7(3):127--144.

\bibitem{R5} Kiri\c{s}ci M. Fermatean fuzzy type a three-way correlation coefficients. In: Gayoso Martinez, V., Yilmaz, F., Queiruga-Dios, A., Rasteiro, D.M., Martin-Vaquero, J., Mierlus¸-Mazilu, I. (eds) Mathematical Methods for Engineering Applications, ICMASE 2023. Springer Proceedings in Mathematics \& Statistics, 2024;499:325–338.

%\bibitem{kirisciwastewater} Kiri\c{s}ci M. The Risk Assessment of Wastewater Treatment with an Inte?grated Decision-Making Method, Proceedings of International Conference on Mathematics Advances and Applications, 2024;1:58-65.

\bibitem{kirisciASC}   Kiri\c{s}ci M. Interval-valued fermatean fuzzy based risk assessment for self-driving vehicles, Applied Soft Computing, \textbf{152} (2024), 111265.

\bibitem{kirisciMW} Kiri\c{s}ci M.  Medical Waste Management Based on an Interval-Valued Fermatean Fuzzy Decision-Making Method. Journal of Mathematical Sciences and Modelling. December 2024;7(3):128-145.

\bibitem{kirisci11}  Kiri\c{s}ci M. \emph{Fermatean Fuzzy Type a Three-Way Correlation Coefficients}. In: Gayoso Martínez, V., Yilmaz, F., Queiruga-Dios, A., Rasteiro, D.M., Martín-Vaquero, J., Mierluş-Mazilu, I. (eds) Mathematical Methods for Engineering Applications. ICMASE 2023. Springer Proceedings in Mathematics \& Statistics, 2024;499:325--338.

\bibitem{LiuLiu} Liu DH, Liu YY, Chen XH. Fermatean fuzzy linguistic set and its application in multicriteria decision-making.Int J Intell Syst. 2019;34(5):878‐894.


\bibitem{liuzhou} Liu, Z., Zhou, Z.,  Liu, C. (2023). Estimating the impact of rural centralized residence policy interventions on energy poverty in China. Renewable and Sustainable Energy Reviews, 2023;187:113687.

\bibitem{LuRen} Lu, S., Ren, J., Lee, C. K., Zhang, L.  Spatial-temporal energy poverty analysis of China from subnational perspective. Journal of Cleaner Production, 2022;341: 130907.

\bibitem{mafaldo} Mafalda Matos, A., Delgado, J. M.,  Guimarães, A. S.  Linking energy poverty with thermal building regulations and energy efficiency policies in Portugal. Energies, 2022;15(1):329.

\bibitem{mahu} Mahumane, G.,  Mulder, P. Urbanization of energy poverty? The case of Mozambique. Renewable and Sustainable Energy Reviews, 2022;159:112089.

\bibitem{okush} Okushima, S. Measuring energy poverty in Japan, 2004–2013. Energy policy, 2016;98:557--564.

\bibitem{qiChen} Qi, X., Chen, J., Wang, J., Liu, H.,  Ding, B. The impact of urbanization on the alleviation of energy poverty: Evidence from China. Cities, 2024;151:105130.

\bibitem{rahmanov} Rahnamay Bonab, S., Osgooei, E. Environment risk assessment of wastewater treatment using FMEA method based on Pythagorean fuzzy multiple-criteria decision-making. Environ Dev Sustain (2022). https://doi.org/10.1007/s10668-022-02555-5

\bibitem {ranMis1} Rani, P, Mishra, A R. Interval-valued fermatean fuzzy sets with multi-criteria weighted aggregated sum product assessment-based decision analysis Neural Computing and Applications, 2022:34;8051--8067


\bibitem{reames} Reames, T. G. (2016). Targeting energy justice: Exploring spatial, racial/ethnic and socioeconomic disparities in urban residential heating energy efficiency. Energy policy, 2016;97:549-558.

\bibitem{renner} Renner, S., Lay, J.,  Schleicher, M. The effects of energy price changes: heterogeneous welfare impacts and energy poverty in Indonesia. Environment and Development Economics, 2019;24(2):180--200.

\bibitem{riazetal} M. Riaz, H. M. A. Farid, H. M. Shakeel, D. Arif, \emph{Cost effective indoor HVAC energy efficiency monitoring based on intelligent decision support system under fermatean fuzzy framework}, Scientia Iranica \textbf{30}(6) (2023), 2143--2161.


\bibitem{robinson} Robinson, C.,  Mattioli, G.  Double energy vulnerability: Spatial intersections of domestic and transport energy poverty in the United Kingdom. Energy Research \& Social Science, 2020;70:101--120. https://doi.org/10.1016/j.erss.2020.101-120

\bibitem{SenYager} Senapati, T., Yager, R.R., 2019a. Fermatean fuzzy sets. J. Ambient Intell. Hum. Comput. 11, 663--674 (2020)

\bibitem{SenYager1} Senapati, T., Yager, R.R.  Some new operations over Fermatean fuzzy numbers and application of Fermatean fuzzy WPM in multiple criteria decision making. Informatica 30 (2), 2019, 391–412.

\bibitem{SenYager2} Senapati, T., Yager R.R. Fermatean fuzzy weighted averaging/geometric operators and its application in multi-criteria decision-making methods. Engineering Applications of Artificial Intelligence, 85 (2019) 112–121, https://doi.org/10.1016/j.engappai.2019.05.012

\bibitem{sheih} Shieh, H. S.,  Shah, S. A. A. (2025). Developing a Fuzzy MCDA-Based Multidimensional Index to Measure Energy Poverty in Developing Countries. Social Indicators Research, 2025;176(2):499--531.

\bibitem{sheng} Sheng, P., He, Y., Guo, X. The impact of urbanization on energy consumption and efficiency. Energy \& Environment, 2017;28(7):673--686.

\bibitem{kirisci22} Simsek, N., Kiri\c{s}ci M. Incomplete Fermatean Fuzzy Preference Relations and GroupDecision Making, Topological Algebra and its Applications 11 (1), 20220125

%\bibitem{kirisci33} Simsek, N., Kiri\c{s}ci M. A new risk assessment method for autonomous vehicle driving systems: Fermatean fuzzy AHP Approach. Istanbul Commerce University Journal of Science, 22(44), 2023, 292--309.

\bibitem{sova} Sovacool, B. K.,  Furszyfer Del Rio, D. D. Smart home technologies in Europe: A critical review of concepts, benefits, risks and policies. Renewable and Sustainable Energy Reviews, 2020;120:109-118. https://doi.org/10.1016/j.rser.2019.109-118

\bibitem{streim} Streimikiene, D., Lekavičius, V., Baležentis, T., Kyriakopoulos, G. L.,  Abrhám, J.  Climate change mitigation policies targeting households and addressing energy poverty in European Union. Energies, 2020;13(13):3389.

\bibitem{sun}  Sun, S., Tong, K. Rural-urban inequality in energy use sufficiency and efficiency during a rapid urbanization period. Applied Energy, 2024;364:123133.

\bibitem{teix} Teixeira, I., Ferreira, A. C., Rodrigues, N.,  Teixeira, S. Energy Poverty and Its Indicators: A Multidimensional Framework from Literature. Energies, 2024;17(14):3445.

\bibitem{thompson} Thomson, H., Snell, C.,  Bouzarovski, S. Health, well-being and energy poverty in Europe: A comparative study of 32 European countries. International journal of environmental research and public health, 2017;14(6):584.

\bibitem{thompsenetal} Thomson, H., Simcock, N., Bouzarovski, S.,  Petrova, S.  Energy poverty and indoor cooling: An overlooked issue in Europe. Energy and Buildings, 2019;196:21--29.

\bibitem{thompsnell} Thomson, H.,  Snell, C.  Quantifying the prevalence of fuel poverty across the European Union. Energy Policy, 2013;52:563--572. https://doi.org/10.1016/j.enpol.2012.10.009

\bibitem{thompsonBou} Thomson, H.,  Bouzarovski, S. Addressing energy poverty in the European Union: State of play and action. European Commission, 2018.

\bibitem{xuetal} Xu, Q., Khan, S., Zhang, X.,  Usman, M.  Urbanization, rural energy-poverty, and carbon emission: unveiling the pollution halo effect in 48 BRI countries. Environmental Science and Pollution Research, 2023;30(48):105912--105926.

\bibitem{wang} Wang, Y., Qiao, G., Ahmad, M.,  Yang, D.  Modeling the impact of fiscal decentralization on energy poverty: do energy efficiency and technological innovation matter?. International Journal of Environmental Research and Public Health, 2023;20(5):4360.

\bibitem{Yager0}
Yager RR. Pythagorean fuzzy subsets.   Proc. Joint IFSA World Congress and NAFIPS Annual Meeting,
Edmonton, Canada, 2013.

\bibitem{Yager} Yager, R.R. Pythagorean membership grades in multicriteria decision-making. IEEE Transactions on Fuzzy Systems 22(4) (2014) 958--965.

\bibitem{yadava} Yadava, R. N.,  Sinha, B. Energy–poverty–climate vulnerability nexus: an approach to sustainable development for the poorest of poor. Environment, Development and Sustainability, 2024;26(1):2245--2270.

\bibitem{yawele} Yawale, S. K., Hanaoka, T., Kapshe, M.,  Pandey, R. End-use energy projections: Future regional disparity and energy poverty at the household level in rural and urban areas of India. Energy Policy, 2023;182:113772.


\bibitem{Zadeh} Zadeh LA. Fuzzy sets. Inf Comp 1965;8:338–353

\bibitem{zhouwang} Zhou, K., Wang, Y.,  Hussain, J.  Energy poverty assessment in the Belt and Road Initiative countries: Based on entropy weight-TOPSIS approach. Energy Efficiency, 2022;15(7):46.



%%%%%%%%%%%%%%%%%%%%%%%%%%%%%%%%%%%%%




%----------------------


\end{thebibliography}
\end{document}